\newtheorem{defin}{Definition}[section]
\newtheorem{theorem}{Theorem}[section]
\newtheorem{proposition}{Proposition}[section]
\newtheorem{lemma}{Lemma}[section]
\newcommand{\R}{\mathbb{R}} 
\newcommand{\T}{\mathbb{T}}
\newcommand{\N}{\mathbb{N}} 
\newcommand{\Z}{\mathbb{Z}}
\newcommand{\X}{\times}
\newcommand{\ADN}{A^{\frac{1}{2}}D^{\frac{1}{2}}_N}
\newcommand\D{\partial}
\newcommand{\wen}{w_N}
\newcommand{\ren}{\rho_N}
\numberwithin{equation}{section}
\begin{document}


\title[Approximate deconvolution model for the mean Boussinesq
equation]{On the convergence of an approximate deconvolution
  model to the 3D mean Boussinesq equations}

\author[L. Bisconti]{Luca Bisconti}

\address{(Luca Bisconti) Dipartimento di Matematica e Informatica
  ``U.~Dini,'' Universit\`a degli Studi di Firenze, Via S.~Marta~3,
  I-50139, Firenze, Italia}

\email{\href{mailto:luca.bisconti@unifi.it}{luca.bisconti@unifi.it}}

\begin{abstract} In this paper we study a Large Eddy Simulation (LES)
  model for the approximation of large scales of the 3D Boussinesq
  equations. This model is obtained using the approach first described
  by Stolz and Adams, based on the Van~Cittern approximate
  deconvolution operators, and applied to the filtered Boussinesq
  equations.  Existence and uniqueness of a regular weak solution are
  provided. Our main objective is to prove that this solution
  converges towards a solution of the filtered Boussinesq equations,
  as the deconvolution parameter goes to zero.
\end{abstract}

\maketitle

\noindent
\emph{\footnotesize 2000 Mathematics Subject Classification:} {\scriptsize 35Q35; 76F65;  76D03}.\\
\emph{\footnotesize Key words:} {\scriptsize Boussinesq equations;
  Large eddy simulation; Deconvolution models}.

\section{Introduction}

The interactive motion of a passive scalar and a viscous
incompressible 3D fluid is governed by the following Boussinesq
equations:
\begin{equation} \label{eq:Bouss-visc} \left. \begin{array}{l}
      \D_t u + (u \cdot \nabla) u -\nu\Delta u+ \nabla \pi = \theta e_3,\\
      \D_t \theta + u \cdot \nabla \theta = 0,\\
      \nabla \cdot u = 0, \\
      (u, \theta )\vert_{t=0} = (u_0, \theta_0),
    \end{array}\right.
\end{equation}
where $\nu>0$ is the viscosity, $u = (u_1, u_2, u_3)$ is the velocity
field, $\theta$ that may be interpreted physically as a thermal
variable (or a density variable), $\pi$ the cinematic pressure 
and $e_3:= (0, 0, 1)^T$ where
$\{e_1, e_2, e_3\}$ is the canonical basis of $\R^3$. The data $u_0$
and $\theta_0$ are the given initial velocity and density, where $u_0$
is divergence-free.  This system, possibly considered on appropriate
domains, is used as a mathematical model in the description of various
geophysical phenomena and has a relevant role in atmospheric sciences
(see \cite{Majda-2003, McWill-2006, Sal-1998}). Moreover, it has
received considerable attention in mathematical fluid dynamics for
incompressible flows with a number of studies (see, e.g.,
\cite{Be-Spi-2011, Fan-Zhou-2009, Fan-Zhou, Selmi-2012} for 
some recent papers about this subject).

It is well known that neither the current mathematical theory nor the
analytical improvements are sufficient to show the global
well-posedness of 3D-dimensional Navier-Stokes-like equations, namely
the Boussinesq system, which is a coupling between the fluid velocity
$u$ and a density term $\theta$.  In order to overcome the main
difficulties and to perform numerical simulations, many regularization
methods for the 3D-dimensional fluid equations have been proposed.
Let us recall that the main idea behind LES is that of computing
suitable mean values of the considered quantities (see
\cite{Be-Ili-Lay-2006, Cha-Re-2013, Sag-2001}). More precisely, in LES
models for \eqref{eq:Bouss-visc}, approximations $(w, \rho, q)$ of the
means $(\overline{u}, \overline{\theta}, \overline{\pi})$ are
considered, with
\begin{equation*}
  \overline{u}(t, x) = \int G_\alpha (x, y)u(t,y)dy,
  \,\, \overline{\theta}(t, x) = \int G_\alpha(x, y)\theta(y)dy, 
  \,\, \overline{\pi} = \int G_\alpha(x, y)\pi(y)dy,
\end{equation*}
where $\alpha$ is a scale parameter and $G_\alpha$ is a smoothing
kernel such that $G_\alpha \to \delta$ when $\alpha \to 0$, with
$\delta$ the Dirac function. This is a convolution filter and
represents the case that we consider throughout the article.

When we formally filter Equations \eqref{eq:Bouss-visc}, we obtain
what we call the ``mean Boussinesq equations'', i.e.
\begin{equation} \label{eq:Bouss-mean} \left. \begin{array}{l} \D_t
      \overline{u} + \nabla\cdot \overline{(u \otimes u)} -\nu\Delta
      \overline{u}+ \nabla \overline{\pi} =
      \overline{\theta} e_3,\\
      \D_t  \overline{\theta} +  \nabla \cdot \overline{(\theta u)} = 0,\\
      \nabla \cdot \overline{u} = 0, \\
      (\overline{u}, \overline{\theta} )\vert_{t=0} = (\overline{u_0},
      \overline{\theta_0}),
    \end{array}\right.
\end{equation}
where $u\otimes u := (u_1 u , u_2 u , u_3 u )$ and, in the
current case, we supply this problem with periodic boundary conditions.

Here, we consider the Approximate Deconvolution Model (ADM),
introduced by Adams and Stolz \cite{Adam-Stolz, St-Ad-1999,
  St-Ad-2001}, as far as we know. This model uses, roughly speaking,
similarity properties of turbulence and it is defined by approximating
the filtered bi-linear terms as follows:
\begin{equation*}
  \overline{(v \otimes v)} \sim 
  \overline{(D_N (\overline{v}) \otimes D_N (\overline{v}))}
  \textrm{ and } 
  \overline{(\varphi v)} \sim 
  \overline{(D_N (\overline{\varphi}) D_N (\overline{v}))},
\end{equation*}
where $v$ and $\varphi$ play the role of $u$ and $\theta$
respectively, and the filtering operator $G_\alpha$ is defined by the
Helmholtz filter (see, e.g., \cite{Be-Lew-2012, Lew-2009}), with
$\overline{(\,\cdot \,)}= G_\alpha(\, \cdot\,  )$ and
$G_\alpha:=(I-\alpha^2\Delta)^{-1}$. Here, $D_N$ is the deconvolution
operator, which is constructed using the Van~Cittert algorithm (see,
e.g, \cite{Lew-2009}) and is formally defined by
\begin{equation} \label{eq:deconv} D_N := \sum_{n=0}^N (I -
  G_{\alpha})^n \textrm{ with } N\in\N.
\end{equation}

The ADM that we study in this paper is defined by
\begin{equation} \label{eq:bilinear-form} B(w, w) := \overline{D_N(w)
    \otimes D_N(w)}, \,\,\, \mathcal{B}(\rho, w) :=
  \overline{D_N(\rho) D_N(w)},
\end{equation}
and the system that we consider, in the space-periodic setting, is the
following
\begin{equation}
  \label{eq:Bouss-approx}
  \begin{aligned}
    &\D_tw + \nabla \cdot \overline{D_N(w)\otimes D_N(w)} -\nu\Delta w
    + \nabla q = \rho e_3,
    \\
    &\D_t \rho + \nabla \cdot \overline{D_N(\rho) D_N(w)} -\epsilon
    \Delta \rho =0,
    \\
    &\nabla \cdot w  = 0,\\
    &w(0, x) = \overline{u_0}(x),\\
    &\rho(0, t) = \overline{\theta_0}(x),
  \end{aligned}
\end{equation}
where $\epsilon> 0$ is the diffusion coefficient, with $0<\epsilon
<1$.  We are aimed at considering \eqref{eq:Bouss-approx} as an
approximation of \eqref{eq:Bouss-mean} and, somehow, the related
solutions are such that $w \simeq G_\alpha(u)$ and $\rho \simeq
G_{\alpha}(\theta)$. In this model, we take into account the case in
which ``$N$ is large'', and the convolution operator $D_N$ is such
that
\begin{equation*}
  D_N \to G_\alpha^{-1}=A:= I -\alpha^2\Delta, \textrm{ as } 
  N \to +\infty,
\end{equation*}
in a suitable sense (see below for details, see also
\cite{Be-Ca-Lew-2013}).  Then, taking $\epsilon \to 0$ as $N\to
+\infty$, we prove that the system \eqref{eq:Bouss-approx} converges
 to the averaged Equations \eqref{eq:Bouss-mean}, as $N\to+\infty$,
when the scale of filtration $\alpha$ remain fixed.

Given $\theta_0, u_0\in L^2(\T^3)$ with $\nabla\cdot u_0=0$, in the
weak sense, we first show the existence and uniqueness of solutions
$(w_N^\epsilon, \rho_N^\epsilon, q_N^\epsilon)$ to problem
\eqref{eq:Bouss-approx} (cf. Theorem~\ref{preliminary-results}) such
that
\begin{equation*}
  \begin{aligned}
    & w_N^\epsilon, \in L^\infty(0, T; W^{1,2}(\T^3)^3)
    \cap L^2(0, T; W^{2,2}(\T^3)^3),\\
    & \rho_N^\epsilon \in L^\infty(0, T; W^{1,2}(\T^3))
    \cap L^2(0, T; W^{2,2}(\T^3)),\\
    & q_N^\epsilon \in L^2(0, T; W^{1,2}(\T^3)) \cap L^{5/3}(0, T;
    W^{2,5/3}(\T^3)).
  \end{aligned}
\end{equation*}
Later on, we will take $\epsilon$ depending on $N$, and we will write
$(w_N, \rho_N, q_N)$ in place of $(w_N^\epsilon, \rho_N^\epsilon,
q_N^\epsilon)$ as the solution of \eqref{eq:Bouss-approx}. Our main
result reads as follows.
\begin{theorem} \label{main} Let $\theta_0, u_0\in L^2(\T^3)$ with
  $\nabla\cdot u_0=0$ and let $\alpha > 0$.  Assume that $\epsilon\to
  0$ as $N\to +\infty$. Then, from the sequence of solutions $\{(w_N ,
  \rho_N , q_N )\}_{N\in \N}$ to \eqref{eq:Bouss-approx}, one can
  extract a sub-sequence (still labeled $\{(w_N , \rho_N, q_N
  )\}_{N\in \N}$) such that
  \begin{align*}
    & \!\left. \begin{array}{ll} w_N \to w\!\! &\!\!
        \left\{ \begin{array}{l} \!  \textrm{weakly in } L^2(0, T;
            W^{2,2}(\T^3)^3) \textrm{ and } \textrm{weakly}^\star
            \textrm{ in }
            L^{\infty}(0, T; W^{1,2}(\T^3)^3),\\
            \! \textrm{strongly in } L^p(0, T; W^{1,2}(\T^3)^3),\,\,
            \forall 1 \leq p < +\infty,
          \end{array} \right. 
      \end{array} \right. \\
    &\!  \left. \begin{array}{ll}
        \rho_N \to \rho\!\!  & \!\!
        \left\{ \begin{array}{l}
            \! \textrm{weakly in } L^2(0, T; W^{1,2}(\T^3)) \textrm{ and }
            \textrm{weakly}^\star \textrm{ in } 
            L^{\infty}(0, T; W^{1,2}(\T^3)),\\
            \!  \textrm{strongly in } L^2(0, T; L^2(\T^3)),
          \end{array} \right. 
      \end{array} \right. \\
    &\,\,\,\,\,  q_N \to  q \textrm{ weakly in } L^2(0, T; W^{1,2}(\T^3)) \cap 
    L^{5/3}(0, T ; W^{2,5/3}(\T^3)),
  \end{align*}
  with $(w, \rho, q )$ verifying the weak formulation for the
  system \eqref{eq:Bouss-mean}. Further, the following energy
  inequality holds true
  \begin{equation*}
    \frac{1}{2}\frac{d}{dt} \left(\|Aw\|^2 + \|A\rho\|^2\right)
    + \nu\|\nabla
    Aw\|^2 \leq \big(A \rho e_3, Aw \big).
  \end{equation*}
\end{theorem}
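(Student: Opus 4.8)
The plan is a compactness argument in three stages: (i) a priori estimates uniform in $N$ and $\epsilon$; (ii) bounds on the time derivatives and Aubin--Lions compactness; (iii) passage to the limit in the weak formulation and in the energy identity. For Stage~(i) I would test the first equation of \eqref{eq:Bouss-approx} with $AD_N w_N$ and the second with $AD_N\rho_N$, exploiting that $D_N$ and $A=G_\alpha^{-1}$ are self-adjoint, commute, and satisfy $I\le D_N\le A$ (read off the Fourier symbols: with $g=\widehat{G_\alpha}(k)=(1+\alpha^2|k|^2)^{-1}$ one has $d_N(k)=\sum_{n=0}^N(1-g)^n=\frac{1-(1-g)^{N+1}}{g}\in[1,1/g]$), so that $AD_N=(\ADN)^2$ with $\ADN$ self-adjoint. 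The crucial cancellations are
\[
(\nabla\cdot\overline{D_Nw_N\otimes D_Nw_N},\,AD_Nw_N)=(\nabla\cdot(D_Nw_N\otimes D_Nw_N),\,D_Nw_N)=0,\qquad (\nabla\cdot\overline{D_N\rho_N D_Nw_N},\,AD_N\rho_N)=0,
\]
both using $A\overline{(\cdot)}=I$ and $\nabla\cdot D_Nw_N=0$, together with the vanishing of the pressure contribution. Adding the two identities and using Young's inequality on the buoyancy term $(\ADN(\rho_N e_3),\ADN w_N)$ gives
\[
\tfrac12\tfrac{d}{dt}\big(\|\ADN w_N\|^2+\|\ADN\rho_N\|^2\big)+\nu\|\nabla\ADN w_N\|^2+\epsilon\|\nabla\ADN\rho_N\|^2\le\tfrac12\|\ADN\rho_N\|^2+\tfrac12\|\ADN w_N\|^2,
\]
and since $\ADN w_N(0)=D_N^{1/2}A^{-1/2}\,u_0$, $\ADN\rho_N(0)=D_N^{1/2}A^{-1/2}\,\theta_0$ have $L^2$-norms $\le\|u_0\|,\|\theta_0\|$ (again by $D_N\le A$), Gr\"onwall yields bounds for $\{\ADN w_N\}$ in $L^\infty(0,T;L^2)\cap L^2(0,T;W^{1,2})$, $\{\ADN\rho_N\}$ in $L^\infty(0,T;L^2)$, and $\{\sqrt\epsilon\,\nabla\ADN\rho_N\}$ in $L^2(0,T;L^2)$, all uniform in $N$ and $\epsilon$. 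As $D_N\ge I$ these control $\{w_N\}$ in $L^\infty(0,T;W^{1,2})\cap L^2(0,T;W^{2,2})$ and $\{\rho_N\}$ in $L^\infty(0,T;W^{1,2})$; moreover, writing $D_Nw_N=\big(D_N^{1/2}A^{-1/2}\big)\ADN w_N$ with the uniformly bounded multiplier $D_N^{1/2}A^{-1/2}$ (symbol $\sqrt{1-(1-g)^{N+1}}\le1$), the deconvolved fields $\{D_Nw_N\}$, $\{D_N\rho_N\}$ are bounded in $L^\infty(0,T;L^2)\cap L^2(0,T;W^{1,2})$, hence in $L^{10/3}(0,T;L^{10/3})$.

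For Stage~(ii), from the last bound $\overline{D_Nw_N\otimes D_Nw_N}$ and $\overline{D_N\rho_N D_Nw_N}$ are bounded in $L^{5/3}(0,T;W^{2,5/3})$ (the filter gains two derivatives), so their divergences lie in $L^{5/3}(0,T;L^2)$; combined with $\nu\Delta w_N\in L^2(0,T;L^2)$, $\nabla q_N\in L^2(0,T;L^2)$ (the estimates of Theorem~\ref{preliminary-results} being uniform in $N$ once Stage~(i) is in force), $\rho_N e_3\in L^\infty(0,T;L^2)$ and $\epsilon\Delta\rho_N$ bounded in $L^2(0,T;L^2)$, this gives $\{\partial_t w_N\}$ and $\{\partial_t\rho_N\}$ bounded in $L^{5/3}(0,T;Y)$ for a space $Y$ with $W^{2,2}\hookrightarrow\hookrightarrow W^{1,2}\hookrightarrow Y$, resp. $W^{1,2}\hookrightarrow\hookrightarrow L^2\hookrightarrow Y$. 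Aubin--Lions--Simon then produces a subsequence with $w_N\to w$ strongly in $L^2(0,T;W^{1,2})$ — hence, interpolating with the $L^\infty(0,T;W^{1,2})$ bound, in every $L^p(0,T;W^{1,2})$, $p<\infty$ — and $\rho_N\to\rho$ strongly in $L^2(0,T;L^2)$; the remaining weak and weak-$\star$ limits, as well as $q_N\rightharpoonup q$, follow from the uniform bounds, and $w(0)=\overline{u_0}$, $\rho(0)=\overline{\theta_0}$ from the fact that $w_N(0),\rho_N(0)$ are $N$-independent. Applying $\ADN$ (symbol $\le a(k)$) to the equations similarly bounds $\partial_t\ADN w_N,\partial_t\ADN\rho_N$ in $L^{5/3}(0,T;W^{-2,2})$, so Aubin--Lions also gives $\ADN w_N\to Aw$ and $\ADN\rho_N\to A\rho$ strongly in $L^2(0,T;L^2)$ once the limits are identified; the identification $\ADN w_N\rightharpoonup Aw$ comes from testing against smooth $\varphi$, since $(\ADN w_N,\varphi)=(w_N,D_N^{1/2}A^{1/2}\varphi)$ and $D_N^{1/2}A^{1/2}\varphi\to A\varphi$ in $L^2$ by $d_N(k)\to a(k)$ and dominated convergence.

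Stage~(iii) hinges on the convergence of the quadratic terms, for which I need $D_Nw_N\to Aw$ and $D_N\rho_N\to A\rho$ strongly in $L^2(0,T;L^2)$. I would get these by combining: (a) the uniform bound of $\{D_Nw_N\}$ in $L^2(0,T;W^{1,2})$; (b) the strong convergence $D_Nw_N\to Aw$ in $L^2(0,T;W^{-1,2})$ — splitting $D_Nw_N-Aw=D_N(w_N-w)+(D_N-A)w$, where $\|D_N(w_N-w)\|_{W^{-1,2}}\le\|w_N-w\|_{W^{1,2}}\to0$ (as $D_N\le A$) and $(D_N-A)w\to0$ in $L^2(0,T;L^2)$ by dominated convergence in Fourier, since $|d_N(k)-a(k)|\,|\widehat w|\le 2a(k)|\widehat w|$ and $w\in L^2(0,T;W^{2,2})$; and (c) the interpolation inequality $\|v\|_{L^2}^2\le\|v\|_{W^{1,2}}\|v\|_{W^{-1,2}}$. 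The same scheme with $W^{-2,2}$ in place of $W^{-1,2}$ (using only $\rho_N\to\rho$ in $L^2L^2$) handles $D_N\rho_N\to A\rho$. Consequently $D_Nw_N\otimes D_Nw_N\to Aw\otimes Aw$ and $D_N\rho_N D_Nw_N\to A\rho\,Aw$ in $L^1(0,T;L^1)$ and, being bounded in $L^{5/3}(0,T;L^{5/3})$, in the sense of distributions; applying the continuous filter and a divergence passes the nonlinearities to the limit, while $\epsilon\|\Delta\rho_N\|_{L^2L^2}\le\sqrt\epsilon\,C\to0$ kills the artificial diffusion. The linear terms being routine, $(w,\rho,q)$ with $\overline u=w$, $\overline\theta=\rho$ solves the weak formulation of \eqref{eq:Bouss-mean}. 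Finally, for the energy inequality I multiply the differential identity of Stage~(i) by $\psi\in C^\infty_c(0,T)$, $\psi\ge0$, integrate in time, drop the nonnegative $\epsilon$-term, and let $N\to\infty$: the strong $L^2L^2$ convergence of $\ADN w_N,\ADN\rho_N$ handles the $\psi'$ and buoyancy terms, and weak $L^2L^2$ convergence of $\nabla\ADN w_N$ with lower semicontinuity handles the viscous term, yielding $\tfrac12\tfrac{d}{dt}(\|Aw\|^2+\|A\rho\|^2)+\nu\|\nabla Aw\|^2\le(A\rho e_3,Aw)$ in $\mathcal D'(0,T)$.

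I expect the main obstacle to be precisely the strong convergence $D_Nw_N\to Aw$ (and its analogue for $\rho_N$): the usual Aubin--Lions argument cannot be applied directly to $D_Nw_N$ because the operator $D_N$ varies with $N$, so one must simultaneously use the parabolic-type compactness encoded in the $L^2(0,T;W^{1,2})$ bound on the deconvolved field — itself a consequence of the operator inequality $D_N\le A$ and the cancellations in Stage~(i) — and the spectral asymptotics $d_N(k)\to a(k)$, gluing them together through the interpolation inequality in (c). Getting the function spaces in Stage~(ii) sharp enough that these two ingredients both apply (in particular that $\{D_Nw_N\}$ genuinely sits in $L^2(0,T;W^{1,2})$, not merely in a weaker space) is the delicate bookkeeping step on which everything else rests.
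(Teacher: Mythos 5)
Your overall architecture (a priori bounds, time-derivative bounds plus Aubin--Lions, passage to the limit, energy inequality by lower semicontinuity) matches the paper, and your treatment of the velocity is sound: $D_N(w_N)$ really is bounded in $L^\infty(0,T;L^2)\cap L^2(0,T;W^{1,2})$ uniformly in $N,\epsilon$ because the gradient control comes from the fixed viscosity $\nu$, and your splitting $D_N w_N-Aw=D_N(w_N-w)+(D_N-A)w$ plus interpolation is a legitimate alternative to the paper's route (which instead applies Aubin--Lions directly to $D_N(w_N)$ via the bound $\D_t D_N(w_N)\in L^{4/3}(0,T;H_{-1})$ and then identifies the limit as $Aw$). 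The gap is in the density. At the end of Stage~(i) you claim $\{D_N\rho_N\}$ is bounded in $L^2(0,T;W^{1,2})$ (hence in $L^{10/3}$) uniformly; this is false in the regime of the theorem. The only gradient control on the density comes from the artificial diffusion: the energy estimate gives $\sqrt{\epsilon}\,\nabla \ADN(\rho_N)\in L^2(0,T;L^2)$ with order $O(1)$, i.e.\ $\|\nabla D_N(\rho_N)\|_{L^2L^2}=O(\epsilon^{-1/2})$, and the alternative bound through $\rho_N\in L^\infty(0,T;H^1)$ costs a factor $N+1$ when you apply $D_N$. Since $\epsilon\to 0$ as $N\to+\infty$, there is no uniform positive-order bound on $D_N(\rho_N)$ (nor on $\ADN(\rho_N)$) at all. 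Consequently your interpolation step (c) cannot upgrade the (correct) strong convergence $D_N\rho_N\to A\rho$ in $L^2(0,T;H^{-2})$ to strong convergence in $L^2(0,T;L^2)$: boundedness in $L^\infty L^2$ plus strong convergence in a negative-order space only yields strong convergence in $H^{-s}$, $s>0$. With that, your passage to the limit in the coupled term $\overline{D_N(\rho_N)D_N(w_N)}$, which you base on strong $L^2L^2$ convergence of both factors, collapses; the same unproved strong convergence of $\ADN(\rho_N)$ is invoked in your energy-inequality argument to handle the $\psi'$ term.

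The repair (and what the paper actually does) is to never ask for strong convergence of the density-type quantities: $D_N(\rho_N)$ converges only weakly in $L^2(0,T;L^2)$ (weakly-$\star$ in $L^\infty L^2$, strongly just in $L^2(0,T;H^{-1})$ via $\D_t D_N(\rho_N)\in L^2(0,T;H^{-2})$), and the product term is split as
\begin{equation*}
\int_0^T \Big( G\big(D_N(\rho_N)D_N(w_N)-A\rho\,Aw\big),\nabla h\Big)
=\int_0^T \Big(D_N(\rho_N)\big(D_N(w_N)-Aw\big),\nabla G(h)\Big)
+\int_0^T \Big(D_N(\rho_N)-A\rho,\;Aw\cdot\nabla G(h)\Big),
\end{equation*}
where the first integral vanishes by the strong $L^2(0,T;H_0)$ convergence of $D_N(w_N)$ against the $L^\infty L^2$ bound on $D_N(\rho_N)$, and the second by weak convergence of $D_N(\rho_N)$ once one checks $Aw\cdot\nabla G(h)\in L^2(0,T;L^2)$ (using that $\nabla G$ gains derivatives and $H^2\hookrightarrow L^\infty$). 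Similarly, in the energy inequality the buoyancy term is passed to the limit using strong convergence of $\ADN(w_N)$ only, with $\ADN(\rho_N)$ entering weakly, and the remaining left-hand terms are handled by lower semicontinuity of the norm in the time-integrated form. A minor related slip: your $L^{5/3}(0,T;W^{2,5/3})$ bound on $\overline{D_N\rho_N D_N w_N}$ in Stage~(ii) rests on the same unavailable $L^{10/3}$ bound for $D_N\rho_N$; the correct uniform bound is $D_N(\rho_N)D_N(w_N)\in L^2(0,T;L^{3/2})$ from $L^\infty L^2\times L^2 L^6$, which still suffices for the time-derivative estimate on $\rho_N$.
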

\medskip

\noindent$\mathbf{Plan\,\, of\,\, the\,\, paper}$ In
Section~\ref{basics} we recall the notation, we introduce the used
functional spaces and we summarize the main properties of the
deconvolution operator $D_N$.  Next, in
Section~\ref{sec-approximate-model}, we prove the existence and
uniqueness result for the problem \eqref{eq:Bouss-approx} and some
fundamental bounds for the related solutions. Finally, in
Section~\ref{limit}, Theorem~\ref{main} is proved.

\section{Basic facts and notation} \label{basics} In this section we
introduce the functional setting that we will use in the sequel, and
we give the definition and the main properties of the deconvolution
operator.

We denote by $x := (x_1, x_2, x_3) \in \R^3$ a generic point in
$\R^3$. Let be given $L \in \R^\star_+ := \{x \in \R : x > 0\}$, and
define $\Omega := ]0, L[^3 \subset \R^3$.  We put $
{\mathcal{T}}_3 := 2\pi\Z^3/L$ and $\T^3$ is the torus
defined by $\T^3 := \big(\R^3/ {\mathcal{T}}_3\big)$.  We
use the classical Lebesgue spaces $L^p=L^p(\T^3)$ and Sobolev spaces
$W^{k, p}=W^{k, p}(\T^3)$, with $H^k:=W^{k, 2}$, for $p, k \in \N$,
in the periodic setting.  We denote by $\|\cdot\|$ the
$L^2(\T^3)$-norm and the associated operator norms and we always
impose the zero mean condition on the considered fields. In the
sequel, we will use the same notation for scalar and vector-valued
functions, since no ambiguity occurs.  Moreover, dealing with
divergence-free vector fields, we also define, for a general exponent
$s \geq 0$, the following spaces
\begin{equation*}
  H_s :=
  \Big\{ v : \T^3 \to \R^3\, \colon\,  v \in (H^s)^3,\,\, 
  \nabla \cdot v = 0,
  \,\,\int_{\T^3}v dx = 0\Big\}.
\end{equation*}
If $0\leq s \leq 1$, the condition $\nabla\cdot v =0$ must be
understood in the weak sense. Let $X$ be a real Banach space with norm
$\|\cdot\|_X$.  We will use the customary Bochner spaces $L^q(0, T;
X)$, with norm denoted by $\|\cdot\|_{L^q(0,T;X)}$.

For $v \in H^s$, we can expand the fields as $v(x) = \sum_{k\in
  {\mathcal{T}}_3^\star} \widehat{v}_k e^{ik\cdot x}$,
where $k \in {\mathcal{T}}_3^\star$, and the Fourier
coefficients are defined by $\widehat{v}_k = 1/|\T^3|\int_{{\T}^3}
v(x) e^{-ik \cdot x}dx$.  The magnitude of $k$ is given by $|k|^2 :=
 (k_1)^2 + (k_2)^2 +(k_3)^2$. The $H^s$ norms
are defined by $\|v\|^2_s:= \sum_{k\in
  {\mathcal{T}}_3^\star} |k|^{2s} |\widehat{v}_k|^2$, where
$\|v\|^2_0 := \|v\|^2$.  The inner products associated to these norms
are $(w, v)_{H^s} := \sum_{k\in
  {\mathcal{T}}_3^\star}|k|^{2s}\widehat{w}_k \cdot
\overline{\widehat{v}_k}$, where $\overline{\widehat{v}_k}$ denotes
the complex conjugate of $\widehat{v}_k$.  To have real valued vector
fields, we impose $\widehat{v}_{-k} = \overline{\widehat{v}_k}$ for
any $k \in {\mathcal{T}}_3^\star$ and for any field denoted
by $v$.  It can be shown (see e.g. \cite{Do-Gib-1995}) that when $s$
is an integer, $\|v\|^2_s = \|\nabla^s v\|^2$ and also, for general $s
\in \R$, $(H^s)' = H^{-s}$. All these considerations can be 
adapted straightforwardly to the case of the spaces $H_s$. 
In particular, we denote $(H_s)'$ by  $H_{-s}$.

We will denote by $C$ generic constants, which may change from line to
line, but which are independent of the diffusion coefficient
$\epsilon$, the deconvolution parameter $N$ and of the solution of the
equations we are considering.

Let us now briefly recall the properties of the
  Helmholtz filter. We also introduce some additional notation
about the the operators involved in the definition of the considered
deconvolution model.
Let $\alpha > 0$ be a given fixed number and, for
  $w \in H_s$, $s\geq -1$,  let us denote by $(\overline{w},  \pi) \in H_{s+2} \X
  H^{s+1}$, 
 the unique solution of the following
 Stokes-like problem:
  \begin{equation} \label{eq:helmholtz-filter}
    \begin{aligned}
      & \overline{w} -\alpha^2\Delta \overline{w} +
      \nabla \pi = w \textrm{ in } \T^3,\\
      & \nabla \cdot  \overline{w} = 0 \textrm{ in } \T^3,\\
      & \int_{\T^3} \overline{w} dx = 0, \,\,\, \int_{\T^3} \pi dx =
      0.
    \end{aligned}
  \end{equation}
The velocity component of $(\overline{w},  \pi)$ is denoted also
  by $\overline{w} = G_\alpha(w)$ and $A_1:=G_\alpha^{-1}$.
Let us consider an element $w \in H_s$ and take its expansion in terms of Fourier
series as $w = \sum_{k\in {\mathcal{T}}_3^\star}\widehat{w}_k
e^{ik\cdot x}$, so that inserting this expression in
\eqref{eq:helmholtz-filter}
and looking for $(\overline{\omega}, \pi)$, in terms of
Fourier
series,  we get 
\begin{equation} \label{eq:pure-utility}
\overline{w} =  \sum_{k\in  {\mathcal{T}}_3^\star}  
\frac{1}{1+\alpha^2|k|^2}
  \widehat{w}_k e^{ik\cdot x} = G_\alpha(w), \textrm{ and } \pi=0. 
\end{equation}

  For a scalar function $\chi$ we still
  denote by $\overline{\chi}$ the solution of the pure Helmholtz
  problem
  \begin{equation} \label{eq:pure-helmholtz} 
 -\alpha^2\Delta \overline{\chi} +
  \overline{\chi} = \chi\,
    \textrm{ in } \T^3,
  \end{equation}
  where $A_2\overline{\chi}:=  -\alpha^2\Delta \overline{\chi} +
  \overline{\chi}$. Further,  taking $\chi\in H^s$ the expression of
  $\overline{\chi}$ in terms of Fourier series can be
  retrieved, formally, by \eqref{eq:pure-helmholtz} substituting $\chi$
in place of $w$.

 In what follows, in order to keep the notation
compact, we use the same symbol $A$ for
the operators $A_1$ and $A_2$, distinguishing the two situations only when it is
required by the context. 
According to the above facts, the deconvolution
  operator $D_N$ in  \eqref{eq:deconv}
is actually  given by   $D_N = \sum_{n=0}^N (I -  A^{-1})^n$, $N\in\N$, 
with $A$ defined by \eqref{eq:helmholtz-filter}, 
when it is acting on the elements of $H_s$ and, by \eqref{eq:pure-helmholtz},
in the case of the scalar functions in $H^s$. 

 Notice that,, in the LES model
  \eqref{eq:Bouss-approx} and in the filtered equations
  \eqref{eq:Bouss-mean}, the symbol ``\hspace{0.05 cm}$
  \overline{\empty{{}^{{}^{\,\,\,\,}}}}$\hspace{0.05 cm}'' denotes the
  pure Helmholtz filter, applied component-wise to the various vector
  and tensor fields. Referring to the right-hand side of first equation in
  \eqref{eq:Bouss-mean}, since $e_3$ is a constant vector,
  then we have that $G_\alpha (\theta e_3) = \overline{\theta e_3} =
  \overline{\theta}e_3 = G_\alpha (\theta)e_3$ and
  $A (\overline{\theta e_3}) = A(\overline{\theta})e_3 $ (where
  the meaning of $A$ is understood in the sense stated above).

Also, for brevity, in the sequel 
we omit the explicit dependence of $G_\alpha$ on $\alpha$, and we write $G$ in place of
$G_\alpha$. 

The deconvolution operator $D_N$
 is constructed thanks to the Van~Cittert algorithm;
the  reader will find a complete description and analysis of the Van
Cittert algorithm and its variants in \cite{Lew-2009}. Here, we only
report the properties needed to describe the considered model.
Let $\omega\in H_s$
(or $\omega \in H^s$), 
starting from the expression \eqref{eq:pure-utility},
we can write the deconvolution
operator in terms of Fourier series by the formula
\begin{gather}
  \widehat{D}_N(k) = \sum_{n=0}^N \left(\frac{\alpha^2
      |k|^2}{1+\alpha^2|k|^2}\right)^n = (1 + \alpha^2 |k|^2
  )\varrho_{N,k} \label{eq:basics-DN} \intertext{where} \varrho_{N,k}
  = 1-\left( \frac{ \alpha^2|k|^2}{1 + \alpha^2|k|^2}\right)^{N+1}
  \intertext{and} D_N(\omega) = \sum_{k\in
    {\mathcal{T}}_3^\star} \widehat{D}_N (k) \widehat{\omega}_k
  e^{ik\cdot x}.
\end{gather}
The basic properties satisfied by $\widehat{D}_N$ are summarized in
the next results
\begin{lemma} For each fixed $k\in {\mathcal{T}}_3$,
  \begin{equation}
    \widehat{D}_N(k) \to 1+ \alpha^2|k|^2 = \widehat{A}_k, \textrm{ as } 
    N \to +\infty, 
  \end{equation}
  even if not uniformly in $k$.
\end{lemma}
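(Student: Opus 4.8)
The plan is to reduce the statement to an elementary fact about geometric series, using the closed form for $\widehat{D}_N(k)$ already recorded in \eqref{eq:basics-DN}. I would fix $k \in {\mathcal{T}}_3$ and set $\lambda_k := \frac{\alpha^2|k|^2}{1+\alpha^2|k|^2}$; since $\alpha>0$ one has $0\le \lambda_k < 1$ for every $k$ (with $\lambda_0=0$, and $\lambda_k\in(0,1)$ for $k\ne 0$). Then $\widehat{D}_N(k)=\sum_{n=0}^N \lambda_k^n$ is the $N$-th partial sum of the convergent geometric series with limit $\frac{1}{1-\lambda_k}=1+\alpha^2|k|^2=\widehat{A}_k$, so $\widehat{D}_N(k)\to\widehat{A}_k$ as $N\to+\infty$. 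Equivalently, from the factorization $\widehat{D}_N(k)=(1+\alpha^2|k|^2)\varrho_{N,k}$ it suffices to note that $\varrho_{N,k}=1-\lambda_k^{N+1}\to 1$, because $0\le\lambda_k<1$ forces $\lambda_k^{N+1}\to 0$.

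For the lack of uniformity I would exhibit the exact error. From the same identity, $\widehat{A}_k-\widehat{D}_N(k)=(1+\alpha^2|k|^2)(1-\varrho_{N,k})=(1+\alpha^2|k|^2)\,\lambda_k^{N+1}$. Now fix $N$ and let $|k|\to+\infty$: then $\lambda_k\to 1$, hence $\lambda_k^{N+1}\to 1$, while $1+\alpha^2|k|^2\to+\infty$, so $\sup_{k}\,|\widehat{A}_k-\widehat{D}_N(k)|=+\infty$ for every fixed $N$; this precludes uniform convergence in $k$. Equivalently, comparing the Fourier multipliers of $D_N$ and $A$ relatively to $A$, one gets $\inf_k \varrho_{N,k}=0$ for each fixed $N$, which is the same obstruction phrased through $\varrho_{N,k}$.

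There is no genuine obstacle here; the only minor point to keep straight is the zero mode $k=0$, where $\widehat{D}_N(0)=1=\widehat{A}_0$ for all $N$ and convergence is exact, so the whole non-uniform behaviour is carried by the high modes $|k|\to+\infty$. I would close by remarking that this non-uniformity is exactly what forces the subsequent analysis to keep quantitative track of the deconvolution parameter $N$, rather than allowing a naive termwise passage to the limit in Fourier space.
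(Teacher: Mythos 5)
Your proof is correct and is essentially the argument the paper intends: the paper states this lemma without proof, since it follows immediately from the formula $\widehat{D}_N(k)=\sum_{n=0}^N\big(\tfrac{\alpha^2|k|^2}{1+\alpha^2|k|^2}\big)^n=(1+\alpha^2|k|^2)\varrho_{N,k}$ in \eqref{eq:basics-DN}, which is exactly the geometric-series computation you carry out. Your explicit quantification of the non-uniformity (the error $(1+\alpha^2|k|^2)\lambda_k^{N+1}$ blowing up as $|k|\to+\infty$ for fixed $N$) is a correct and welcome sharpening, consistent with the limit $\widehat{D}_N(k)\to N+1$ recorded in Lemma~\ref{lem:utility}.
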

This provides that $\{D_N\}_{N\in \N}$ converges to $A$, in some
sense, when $N \to +\infty$.  The meaning of this convergence is
specified in the next lemma (see also \cite[\S 2]{Be-Lew-2012}).

\begin{lemma} \label{lem:utility}
  For each $N\in \N$ the operator $D_N \colon H_s \to H_s$ is
  self-adjoint, it commutes with differentiation, and the following
  properties hold true:
  \begin{align} \allowdisplaybreaks &1 \leq \widehat{D}_N (k) \leq N +
    1,\,\, \forall k\in {\mathcal{T}}_3^\star,
    \label{eq:dn-1} \\
    &\widehat{D}_N (k) \cong (N + 1) \frac{1 + \alpha^2|k|^2}{\alpha^2
      |k|^2}
    \textrm{ for large } |k|, \label{eq:estimate-norm-DN}\\
    &\underset{|k|\to +\infty}{\lim}\widehat{D}_N (k) = N + 1 \textrm{
      for fixed }
    \alpha > 0, \\
    &\widehat{D}_N (k) \leq 1 + \alpha^2 |k|^2= \widehat{A}_k, \forall
    k \in {\mathcal{T}}_3^\star,
    \alpha > 0, \\
    & \textrm{the map } \omega \mapsto D_N (\omega) \textrm{ is an isomorphism
      s.t.}
    \|D_N\|_{H_s} = O(N + 1), \forall s \geq 0,\\
    & \underset{N\to +\infty}{\lim} D_N (\omega) = A\omega \textrm{ in } H_s\,\,
    \forall s \in \R \textrm{ and } \omega \in H_{s+2}.
  \end{align}
\end{lemma}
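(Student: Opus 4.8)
The plan is to carry out every verification on the Fourier side, since $D_N$, $A$ and each derivative $\partial_j$ are Fourier multipliers, and to rely on the closed form recorded in \eqref{eq:basics-DN}, namely $\widehat{D}_N(k)=\sum_{n=0}^N t_k^{\,n}=(1+\alpha^2|k|^2)\varrho_{N,k}$ with $t_k:=\alpha^2|k|^2/(1+\alpha^2|k|^2)$ and $\varrho_{N,k}=1-t_k^{\,N+1}$; the single structural fact $0\le t_k<1$ for every $k\in{\mathcal{T}}_3^\star$ drives all the estimates.

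First I would dispose of the algebraic and elementary properties. Since $\widehat{D}_N(k)$ is real and depends only on $|k|$, for $v,w\in H_s$ one has $(D_Nv,w)_{H^s}=\sum_k|k|^{2s}\widehat{D}_N(k)\,\widehat{v}_k\cdot\overline{\widehat{w}_k}=(v,D_Nw)_{H^s}$, so $D_N$ is self-adjoint; commutation with $\partial_j$ is merely the commutation of the multipliers $\widehat{D}_N(k)$ and $ik_j$, and the same computation shows that $D_N$ preserves $\nabla\cdot v=0$ and $\int_{\T^3}v\,dx=0$, whence $D_N$ indeed maps $H_s$ into $H_s$. The two-sided bound \eqref{eq:dn-1} is immediate from $\widehat{D}_N(k)=\sum_{n=0}^N t_k^{\,n}$ with $0\le t_k<1$ (the $n=0$ term gives $\widehat{D}_N(k)\ge1$, and each of the $N+1$ terms is $\le1$, giving $\widehat{D}_N(k)\le N+1$), and $\varrho_{N,k}\le1$ yields $\widehat{D}_N(k)\le1+\alpha^2|k|^2=\widehat{A}_k$. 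From $1\le\widehat{D}_N(k)\le N+1$ the multiplier norm of $D_N$ on $H_s$ is at most $N+1$, so $\|D_N\|_{H_s}=O(N+1)$; and $D_N$ is invertible with symbol $1/\widehat{D}_N(k)\in[1/(N+1),1]$, hence $D_N^{-1}$ is bounded on $H_s$ (uniformly in $N$), so $D_N$ is an isomorphism.

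Next I would examine the behaviour as $|k|\to+\infty$ with $N,\alpha$ fixed. Writing $t_k=1-\delta_k$ with $\delta_k:=1/(1+\alpha^2|k|^2)\to0$, one gets $\widehat{D}_N(k)=\sum_{n=0}^N t_k^{\,n}\to N+1$, which is the asserted limit; since moreover $(1+\alpha^2|k|^2)/(\alpha^2|k|^2)\to1$, the finer equivalence \eqref{eq:estimate-norm-DN} follows from the Taylor expansion $\varrho_{N,k}=1-t_k^{\,N+1}=(N+1)\delta_k+O(\delta_k^2)=(N+1)/(1+\alpha^2|k|^2)+O(|k|^{-4})$ after multiplication by $1+\alpha^2|k|^2$.

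The remaining property, $D_N\omega\to A\omega$ in $H_s$ for $\omega\in H_{s+2}$ and $s\in\R$, is the one that needs a little care; I would obtain it by dominated convergence in Fourier space. One writes
\[
  \|D_N\omega-A\omega\|_s^2=\sum_{k\in{\mathcal{T}}_3^\star}|k|^{2s}\bigl(\widehat{A}_k-\widehat{D}_N(k)\bigr)^2|\widehat{\omega}_k|^2,\qquad \widehat{A}_k-\widehat{D}_N(k)=(1+\alpha^2|k|^2)\,t_k^{\,N+1}\ge0 .
\]
Each summand tends to $0$ as $N\to+\infty$ because $t_k<1$, and the crude $N$-independent bound $\widehat{A}_k-\widehat{D}_N(k)\le\widehat{A}_k=1+\alpha^2|k|^2$ furnishes the summable majorant $|k|^{2s}(1+\alpha^2|k|^2)^2|\widehat{\omega}_k|^2\le C(1+|k|^2)^{s+2}|\widehat{\omega}_k|^2$, which is finite precisely because $\omega\in H_{s+2}$; dominated convergence then gives the claim. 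Equivalently, one splits the sum into $|k|\le R$ (finitely many terms, each vanishing as $N\to+\infty$) and $|k|>R$ (a tail of the $H_{s+2}$-norm, small uniformly in $N$). The only point to keep an eye on is that, for $s<0$, the inequality $|k|^{2s}(1+|k|^2)^2\le C(1+|k|^2)^{s+2}$ needs a word, which is harmless since $|k|$ stays bounded away from $0$ on ${\mathcal{T}}_3^\star$.
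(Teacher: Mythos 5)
Your proof is correct and complete: all the stated properties reduce to elementary manipulations of the multiplier $\widehat{D}_N(k)=\sum_{n=0}^N t_k^{\,n}=(1+\alpha^2|k|^2)(1-t_k^{\,N+1})$ with $0\le t_k<1$, and your dominated-convergence (or tail-splitting) argument for $D_N\omega\to A\omega$ in $H_s$ when $\omega\in H_{s+2}$ correctly exploits the extra two derivatives, including the harmless remark needed for $s<0$. The paper itself offers no proof of this lemma, simply referring to Berselli--Lewandowski \cite[\S 2]{Be-Lew-2012}; your Fourier-side verification is exactly the standard argument that reference relies on, so there is nothing to reconcile.
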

\noindent Also in this case Lemma \ref{lem:utility} can be directly extended to
the spaces $H^s$.

Finally, for the reminder of the paper we will always use the natural
notation $G = A^{-1} = (I - \alpha^2\Delta)^{-1}$.

\section{The approximate problem} \label{sec-approximate-model}
In this section we prove existence and uniqueness of suitable weak
solutions to the system \eqref{eq:Bouss-approx}.  For the reminder of
this section the parameters $N$, $\alpha$ and $\epsilon$ are fixed,
and we assume that $u_0\in H_0$, $\theta_0\in L^2(\T^3)$, which gives
$ \overline{u_0}= G(u_0)\in H_2$ as well as $ \overline{\theta_0}=
G(\theta_0) \in H^2$.  Let us recall the following definition

\begin{defin}[Regular weak solution]
  We say that a triple $(w, \rho, q)$ is a ``regular weak solution''
  to the system \eqref{eq:Bouss-approx} if the three following
  conditions are satisfied:
  \begin{align}
    \intertext{Regularity:} & w \in L^2(0, T ; H_2) \cap C([0, T
    ]; H_1),\,\,
    \rho \in L^2(0, T ; H^2) \cap C([0, T
    ]; H^1)\label{eq:w-rho-reg}\\
    & \D_tw \in L^2(0, T ; H_0), \,\, \D_t\rho \in L^2(0, T ;
    L^2(\T^3))
    \label{eq:Dt-w-rho-reg}\\
    & q \in L^2(0, T; H^1).  \intertext{Initial data:} &\underset{t\to
      0}{\lim} \| w(t, \cdot) - G(u_0)\|_{1}= 0, \,\, \underset{t\to
      0}{\lim} \| \rho (t, \cdot) - G(\theta_0)\|_{1}= 0,
    \intertext{Weak formulation: For all $(v, h)\in L^2(0, T; H_1)\X
      L^2(0, T; H^1)$} & \begin{aligned} \label{weak-form-1} \int_0^T
      \int_{\mathbb{T}_3} \D_t w \cdot v &- \int_0^T
      \int_{\mathbb{T}_3} G\big( D_N(w) \otimes
      D_N (w)\big) : \nabla v \\
      & + \nu \int_0^T \int_{\mathbb{T}_3} \nabla w : \nabla v +
      \int_0^T \int_{\mathbb{T}_3} \nabla q \cdot v =
      \int_0^T\int_{\mathbb{T}_3}\rho e_3 \cdot v,
    \end{aligned}\\[2 mm]
    & \int_0^T \int_{\mathbb{T}_3} \D_t \rho \cdot h - \int_0^T
    \int_{\mathbb{T}_3} G\big( D_N(\rho) D_N (w)\big) \cdot \nabla h +
    \epsilon \int_0^T \int_{\mathbb{T}_3} \nabla \rho \cdot \nabla h
    =0.  \label{weak-form-2}
  \end{align}
\end{defin}
Notice that, to keep the notation coincise, we suppressed all $dx$ and $dt$ from the
above space-time integrals. For the remainder of the paper we always
use this convention.

\begin{theorem} \label{preliminary-results} Assume that $u_0\in H_0$
  and that $\theta_0\in L^2(\T^3)$, with $\alpha>0$, $\epsilon >0$ and
  $N\in \N$ given. Then, problem \eqref{eq:Bouss-approx} has a unique
  regular weak solution $(w, \rho, q)$. Moreover, this solution
  satisfies the following energy equality
  \begin{equation} \label{energy-equality-2}
    \begin{aligned}
      \frac{1}{2}\frac{d}{dt} \big(\|A^{\frac{1}{2}} &
      D_N^{\frac{1}{2}}(w)\|^2 + \|\ADN (\rho)\|^2\big) +
      \nu\|\nabla  \ADN (w)\|^2\\
      & + \epsilon \|\nabla \ADN (\rho)\|^2 = \big(\ADN (\rho) e_3,
      \ADN (w) \big).
    \end{aligned}
  \end{equation}
\end{theorem}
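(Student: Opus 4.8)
The plan is to construct the solution by a Fourier--Galerkin scheme, to derive the energy equality \eqref{energy-equality-2} as the basic a priori bound, to pass to the limit using Banach--Alaoglu together with the Aubin--Lions--Simon compactness lemma, and finally to prove uniqueness by an energy estimate on the difference of two solutions. Throughout, $N$, $\alpha$, $\epsilon$ are fixed, so $D_N$ is a self-adjoint Fourier multiplier which is an isomorphism of each $H_s$ (and each $H^s$), is bounded with norm $O(N+1)$, and commutes with $A$ and with differentiation; consequently $\ADN(\cdot)$ defines a norm on $H_1$ (resp. $H^1$) equivalent to $\|\cdot\|_1$, with constants depending on $N$ and $\alpha$. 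This smoothing renders the nonlinear terms effectively subcritical, so the construction is essentially routine.

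First I would fix the Galerkin basis given by $e^{ik\cdot x}$ times a basis of $\R^3$ (restricted to $k^\perp$ for the velocity), let $V_m$ be the span of the first $m$ modes and $P_m$ the associated projection, and solve the finite-dimensional system for $(w_m,\rho_m)\in V_m$, with the pressure eliminated since $\nabla q$ is orthogonal to divergence-free fields; its right-hand side is a polynomial function of the coefficients, so Cauchy--Lipschitz gives local solutions. Testing the $w_m$-equation with $AD_N(w_m)$ and the $\rho_m$-equation with $AD_N(\rho_m)$ and adding, the pressure term vanishes, the viscous and diffusive terms produce $\nu\|\nabla\ADN(w_m)\|^2$ and $\epsilon\|\nabla\ADN(\rho_m)\|^2$, and the two nonlinear terms cancel: using that $A$ and $G$ are inverse to each other componentwise on divergence-free fields (cf. \eqref{eq:pure-utility}) together with one integration by parts, $-(\nabla\cdot\overline{D_N(w_m)\otimes D_N(w_m)},AD_N(w_m)) = \tfrac12\int (D_N w_m)\cdot\nabla|D_N w_m|^2 = 0$ and likewise $-(\nabla\cdot\overline{D_N(\rho_m)D_N(w_m)},AD_N(\rho_m)) = \tfrac12\int (D_N w_m)\cdot\nabla|D_N\rho_m|^2 = 0$, both by $\nabla\cdot D_N w_m = 0$. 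This yields \eqref{energy-equality-2} at the Galerkin level; bounding its right-hand side by $\tfrac12\big(\|\ADN(\rho_m)\|^2 + \|\ADN(w_m)\|^2\big)$ and invoking Gronwall's lemma gives global existence of $(w_m,\rho_m)$ and bounds, uniform in $m$, for $w_m$ in $L^\infty(0,T;H_1)\cap L^2(0,T;H_2)$ and $\rho_m$ in $L^\infty(0,T;H^1)\cap L^2(0,T;H^2)$. Reading $\D_tw_m$ and $\D_t\rho_m$ off the equations then yields uniform bounds in $L^2(0,T;H_0)$ and $L^2(0,T;L^2(\T^3))$: for the nonlinear terms one uses $D_N w_m\in L^\infty(0,T;L^6)$, hence $D_N(w_m)\otimes D_N(w_m)\in L^\infty(0,T;L^3)$, so that, $G$ gaining two derivatives, $\overline{D_N(w_m)\otimes D_N(w_m)}\in L^\infty(0,T;W^{2,3}(\T^3))$ and $\nabla\cdot\overline{(\cdot)}\in L^\infty(0,T;W^{1,3}(\T^3))\hookrightarrow L^\infty(0,T;L^2(\T^3))$, and similarly for $\overline{D_N(\rho_m)D_N(w_m)}$.

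Then I would extract, by Banach--Alaoglu, weak and weak-$\star$ limits $w$, $\rho$, $q$ in the spaces above, and apply the Aubin--Lions--Simon lemma (with $H_2\hookrightarrow\hookrightarrow H_1\hookrightarrow H_0$ and $\D_tw\in L^2(0,T;H_0)$, and likewise for $\rho$) to obtain $w_m\to w$ strongly in $L^p(0,T;H_1)$ for every finite $p$ and $\rho_m\to\rho$ strongly in $L^2(0,T;L^2(\T^3))$; continuity of $D_N$ then gives $D_N w_m\to D_N w$ in $L^2(0,T;H_1)$, which is enough to identify the limits of the quadratic terms. One recovers $q$ from $\nabla q = (I-P)\big(\nu\Delta w - \nabla\cdot\overline{D_N(w)\otimes D_N(w)} + \rho e_3\big)\in L^2(0,T;L^2(\T^3))$, so $q\in L^2(0,T;H^1)$ after normalising its mean to zero (and, by elliptic regularity for the resulting pressure Poisson equation, $q$ is in fact more regular). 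The time continuity $w\in C([0,T];H_1)$, $\rho\in C([0,T];H^1)$ and the attainment of the initial data in the $H_1$-norm follow from the Lions--Magenes interpolation theorem applied to $w\in L^2(0,T;H_2)$, $\D_tw\in L^2(0,T;H_0)$ (and likewise for $\rho$). Finally, since after eliminating the pressure the limit equations hold as identities in $L^2(0,T;H_0)$ and $L^2(0,T;L^2(\T^3))$, one may pair them with $AD_N(w)$ and $AD_N(\rho)$ and repeat the integrations by parts above to obtain \eqref{energy-equality-2}.

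For uniqueness, let $(w_i,\rho_i,q_i)$, $i=1,2$, be two regular weak solutions, put $W:=w_1-w_2$, $R:=\rho_1-\rho_2$, subtract the equations and test with $AD_N(W)$ and $AD_N(R)$. Writing the differences of the quadratic terms as $\overline{D_N(W)\otimes D_N(w_1)} + \overline{D_N(w_2)\otimes D_N(W)}$ (and analogously for the $\rho$-term), the contributions in which $D_N(w_2)$, resp. $D_N(w_1)$, occupies the divergence-free slot vanish exactly as in the energy estimate, while the remaining ones are controlled by Hölder's inequality, the Sobolev embeddings and interpolation (using the boundedness of $D_N$) by quantities that, after Young's inequality, are absorbed into $\nu\|\nabla\ADN(W)\|^2 + \epsilon\|\nabla\ADN(R)\|^2$ up to a factor proportional to $\|w_1\|_2^2 + \|\rho_1\|_2^2$, which is integrable in time. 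This leaves a Gronwall inequality for $\|\ADN(W)\|^2 + \|\ADN(R)\|^2$ with vanishing initial datum, whence $W\equiv 0$ and $R\equiv 0$, and then $\nabla(q_1-q_2)=0$, i.e. $q_1=q_2$. The only mildly delicate points are the bookkeeping of the two filters $A_1$ (Stokes) and $A_2$ (scalar Helmholtz), which coincide componentwise on divergence-free fields, and the pressure recovery; neither is a genuine obstacle, the real difficulties of the paper lying in Theorem~\ref{main}.
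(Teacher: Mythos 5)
Your proposal is correct and follows essentially the same route as the paper: a Fourier--Galerkin scheme tested with $(AD_N(w_m),AD_N(\rho_m))$ so that the filtered nonlinear terms cancel, giving the energy equality and global a priori bounds, then Aubin--Lions compactness, pressure recovery, and uniqueness via a Gronwall estimate on $\|\ADN(W)\|^2+\|\ADN(\Gamma)\|^2$ for the difference of two solutions. The only notable deviation is that you bound $\partial_t w_m$ and $\partial_t\rho_m$ by reading them off the equations with $N$-dependent constants (perfectly sufficient here, since $N$, $\alpha$, $\epsilon$ are fixed), whereas the paper tests the equations with the time derivatives themselves so as to obtain bounds whose dependence on $\alpha$, $\epsilon$, $N$ is tracked explicitly for later use in the $N\to+\infty$ limit.
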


\begin{proof}[Proof of Theorem~\ref{preliminary-results}.] 
  We follow the main lines in the proof of
  \cite[Theorem~3.1]{Be-Lew-2012} by using the Galerkin method to
  approximate a regular weak solution to the problem
  \eqref{eq:Bouss-approx} with finite dimensional velocities and
  densities.  We now proceed with the following steps.
 
  \indent$\mathbf{Step\,\, 1}$: \emph{Construction of the
    approximations for velocity and density.}  Since the construction
  of the approximate solutions is very classical, we will only sketch
  it (for more details see, e.g., \cite{Guo-1995, Lions}).  Let be
  given $m \in \N\backslash\{0\}$ and define
  \begin{align*}
    &V^m = \Big\{ w \in H^1 \, \colon \, \int_{\T^3} w(x) e^{-ik\cdot
      x}dx = 0,\,\, \forall k
    \textrm{ with } |k| > m \Big\},\\
    &\mathbf{V}_m = \Big\{ \mathbf{w} \in H_1 \, \colon \, \int_{\T^3}
    \mathbf{w}(x) e^{-ik\cdot x}dx = 0,\,\, \forall k \textrm{ with }
    |k| > m \Big\},
  \end{align*}
  and let $\{E_j\}_{j=1,\ldots, d_m}$ and
  $\{\mathbf{E}_j\}_{j=1,\ldots, \delta_m}$ be orthogonal bases of
  $V^m$ and $\mathbf{V}_m$ respectively.  Without loss of generality,
  we can assume that the $E_j$'s are
 eigen-functions of the operator $I -\alpha^2\Delta$ 
introduced in  \eqref{eq:pure-helmholtz}  as well as the
$\mathbf{E}_j$'s
 are eigen-functions of
  the Stokes-like operator associated to \eqref{eq:helmholtz-filter}.
Further, the $E_j$'s and $\mathbf{E}_j$'s are supposed to have unitary norm.
 We denote by $P_m$ the orthogonal
  projection from $H^1$ onto $V^m$ and, similarly, by $\mathbf{P}_m$ the the
  orthogonal projection from $H_1$ onto $\mathbf{V}_m$.
  
  For every positive integer $m$, we look for an approximate solution
  of problem \eqref{eq:Bouss-approx} of the form
  \begin{equation*}
    \rho_m (t, x) = \sum^{d_m}_{j=1}\rho_{m, j}(t) E_j (x)\, \textrm{ and }\, 
    w_m(t, x) =  \sum^{\delta_m}_{j=1}w_{m, j}(t) \mathbf{E}_j(x).
  \end{equation*}
  Thanks to the Cauchy-Lipschitz Theorem, we can prove the existence
  of a unique $C^1$ maximal solution $\big(w_m(t), \rho_m(t)\big)\in
  \mathbf{V}_m\X V^m$ for all $t \in [0, T_m)$ where $T_m>0$ is the
  maximal existence time, to the system
  \begin{align}
    &\begin{aligned} \label{weak-form-m-1} \int_{\mathbb{T}_3} \D_t
      w_m \cdot v &- \int_{\mathbb{T}_3} G\big( D_N(w_m) \otimes
      D_N (w_m)\big) : \nabla v \\
      & + \nu \int_{\mathbb{T}_3} \nabla w_m : \nabla v =
      \int_{\mathbb{T}_3}\rho_m e_3 \cdot v,
    \end{aligned}\\[2 mm]
    & \int_{\mathbb{T}_3} \D_t \rho_m \cdot h - \int_{\mathbb{T}_3}
    G\big( D_N(\rho_m) D_N (w_m)\big) \cdot \nabla h + \epsilon
    \int_{\mathbb{T}_3} \nabla \rho_m \cdot \nabla h
    =0, \label{weak-form-m-2}
  \end{align}
  for all $(v, h)\in L^2(0, T_m; \mathbf{V}_m)\X L^2(0, T_m; V^m)$. As
  we will see in the sequel, we can actually take $T_m = T$, and this
  concludes the construction of the approximate solutions $\{(w_m,
  \rho_m)\}_{m\in \N}$.

  \indent$\mathbf{Step\,\, 2}$: \emph{Energy a priori estimates} The
  natural and correct test functions in
  \eqref{weak-form-m-1}-\eqref{weak-form-m-2} to get a priori
  estimates are $\big(AD_N(w_m), AD_N(\rho_m)\big)$.  Since $A$ is
  self-adjoint and commute with the differential operators, it holds
  that
  \begin{equation*}
    \begin{aligned}
      & \int_{\mathbb{T}_3} G\big( D_N(w_m) \otimes D_N (w_m)\big) :
      \nabla  (AD_N(w_m)) dx=0, \\
      & \int_{\mathbb{T}_3} G\big( D_N(\rho_m) D_N (w_m)\big) \cdot
      \nabla (AD_N(\rho_m)) dx =0.
    \end{aligned}
  \end{equation*}
Moreover, since $A$ and $D_N$ commute, the following identities hold
  true
  \begin{align*}
    &\big(\D_t w_m, AD_N (w_m)\big) = \frac{1}{2}\frac{d}{dt}
    \|A^{\frac{1}{2}} D^{\frac{1}{2}}_N (w_m)\|^2,\\
    &\big(-\Delta w_m, AD_N (w_m)\big) = \|\nabla A^{\frac{1}{2}}
    D^{\frac{1}{2}}_N (w_m)\|^2,\\
    &\big(\D_t \rho_m, AD_N (\rho_m) \big) =
    \frac{1}{2}\frac{d}{dt}\|A^{\frac{1}{2}}
    D^{\frac{1}{2}}_N (\rho_m) \|^2,\\
    &\big(-\Delta \rho_m, AD_N (\rho_m) \big) = \|\nabla
    A^{\frac{1}{2}} D^{\frac{1}{2}}_N (\rho_m) \|^2,\\
    & \begin{aligned} 
      \big( \rho_m e_3, AD_N (w_m) \big) = &\big(
      A^{\frac{1}{2}}D^{\frac{1}{2}}_N \big(\rho_m e_3\big),
      A^{\frac{1}{2}}D^{\frac{1}{2}}_N (w_m) \big)\\
    =&\big(  A^{\frac{1}{2}}D^{\frac{1}{2}}_N (\rho_m) e_3,
    A^{\frac{1}{2}}D^{\frac{1}{2}}_N (w_m) \big),
  \end{aligned}
\end{align*}
where the meaning of the operators $A$ and $D_N$, 
  as previously discussed,  depends on the type  of their arguments.

  Therefore, with usual computations, we obtain
  \begin{equation} \label{energy-equality} \left\{ \begin{array}{l}
        \begin{aligned}
          \frac{1}{2}\frac{d}{dt}\|\ADN (w_m)\|^2 +
          \nu\|\nabla \ADN (&w_m) \|^2 \\
          & = \big(\ADN (\rho_m) e_3, \ADN (w_m) \big),
        \end{aligned}\\[1.5 em]
        \displaystyle \frac{1}{2}\frac{d}{dt}\|\ADN (\rho_m)\|^2 +
        \epsilon \|\nabla \ADN (\rho_m)\|^2 = 0,
      \end{array}\right.
  \end{equation}
  which provide the following energy equality
  \begin{equation} \label{energy-equality-1}
    \begin{aligned}
      \frac{1}{2}\frac{d}{dt} \Big(\|A^{\frac{1}{2}}
      D_N^{\frac{1}{2}}(w_m)\|^2 &+ \|\ADN (\rho_m)\|^2\Big) +
      \nu\|\nabla  \ADN (w_m)\|^2\\
      & + \epsilon \|\nabla \ADN (\rho_m)\|^2 = \big(\ADN (\rho_m)
      e_3, \ADN (w_m) \big).
    \end{aligned}
  \end{equation}
  Again, from relations in \eqref{energy-equality}, integrating by
  parts and using Poincar\'e's inequality together with Young's
  inequality, we get
  \begin{equation} \label{eq:a-priori-estimates}
    \left\{ \begin{array}{l}
        \begin{aligned}
          \|\ADN (w_m)\|^2 +
          \nu\int_0^t \|& \nabla  \ADN  (w_m)\|^2\\[- 0.5 em]
          &\leq \|\ADN \mathbb{P}_m(\overline{u_0})\|^2 +
          \frac{1}{\nu}\int_0^t \| A^{\frac{1}{2}} D^{\frac{1}{2}}_N
          (\rho_m) \|^2,
        \end{aligned}\\
        \|\ADN (\rho_m)\|^2 + 2\epsilon \displaystyle \int_0^t\|\nabla
        \ADN (\rho_m)\|^2 \leq \|\ADN \mathbb{P}_m
        (\overline{\theta_0})\|^2,
      \end{array}\right.
  \end{equation}
  with $t\in [0, T_m)$. Recalling the properties of $D_N$, that $P_m$
  and $\mathbf{P}_m$ and commute with $A$ and $D_N$ and that the
  operator $A^{-\frac{1}{2}}D_N^{\frac{1}{2}}$ has for symbol
  $0\leq\varrho^{1/2}_{N,j} \leq 1$, we get
  \begin{equation*}
    \|\ADN \mathbb{P}_m(\overline{u_0})\|
    =\|\mathbb{P}_m\ADN (\overline{u_0})\|\leq
    \|\ADN(\overline{u_0})\|= \|A^{-\frac{1}{2}}D_N^{\frac{1}{2}}(u_0)\|
    \leq \|u_0\|,
  \end{equation*}
  as well as $\|\ADN \mathbb{P}_m(\overline{\theta_0})\|\leq
  \|\theta_0\|$. Hence, adding the two inequalities in
  \eqref{eq:a-priori-estimates} we obtain that, for $t\in [0, T_m)$
  \begin{equation}
    \begin{aligned} \label{eq:first-bound}
      \|\ADN (w_m)\|^2  &+
      \|\ADN (\rho_m)\|^2 +
      \nu\int_0^t\|\nabla A^{\frac{1}{2}}D_N^{\frac{1}{2}}(w_m)\|^2\\
      &+ 2\epsilon\int_0^t\|\nabla \ADN (\rho_m)\|^2 \leq 
      \|u_0\|^2 +  \big(1 + \frac{t}{\nu}\big)  \|\theta_0\|^2.
    \end{aligned}
  \end{equation}
  Exploiting the fact that $E_j$'s and $\mathbf{E}_j$'s are
  eigen-functions for both $A$ and $D_N$ and hence also for $\ADN$, and
  recalling \eqref{eq:basics-DN}, then we also have that
  \begin{equation} \label{eq:bound-for-wm-rhom}
    \sum^{d_m}_{j=1}\varrho_{N,j}w_{m,j}^2(t) +
    \sum^{\delta_m}_{j=1}\varrho_{N,j}\rho_{m,j}^2(t) \leq C\big(
    \|u_0\|^2 + \big(1 +\frac{t}{\nu}\big) \|\theta_0\|^2 \big).
  \end{equation}
  In particular, from \eqref{eq:first-bound} and 
\eqref{eq:bound-for-wm-rhom}, we infer that
  the maximal solution $(w_m, \rho_m)$ of
  \eqref{weak-form-m-1}-\eqref{weak-form-m-2} is global. Otherwise, if
  $T_m$ is finite, then the right-hand side of \eqref{eq:first-bound}
  will be finite as well and the approximate solution $(w_m, \rho_m)$
  (that can not blow-up in $[0, T_m]$ being bounded) would have a life
  span strictly larger than $T_m$, which is in contradiction with
  maximality.  Therefore, we can take $T_m = T$ for any $T <+\infty$,
  and the approximate solutions are well-defined on $[0, +\infty)$.

  \indent$\mathbf{Step\,\, 3}$: \emph{Further a priori estimates.}  We
  now provide other suitable bounds, for the terms $w_m$ and $\rho_m$,
  that are summarized in the tables below.  Let us consider the first
  table \eqref{table}, the other one is organized in a similar way.
  In the first column we have labeled the estimates. The second
  specify the quantity that is bounded in the norm indicated in column
  third.  Finally, fourth column states the order of magnitude of the
  norms in terms of the relevant parameters $\alpha$, $\epsilon$, $m$
  and $N$. Notice that $O(1)=O(1; m)$ means a uniform bound with
  respect to $m$.  For instance, the meaning of
  ($\eqref{table}$-$(a)$) is that $A^{\frac{1}{2}}D_N^{\frac{1}{2}}
  (w_m)$ is bounded in $L^\infty(I; H_0)\cap L^2(I; H_1)$, where
  $I=[0,T]$, $T>0$, with order $O(1; m)$.  All the estimates, except
  (\eqref{table}-(g)) and (\eqref{table-rho}-(g)), are also uniform
  with respect to the deconvolution parameter $N$.
  \begin{equation} \label{table} {
      \begin{tabular}{|@{\,}l@{\,}|@{\,}l@{\,}|@{\,}l@{\,}|@{\,}l@{\,}|}
        \hline
        Label   &   Variable & Bound   &    Order \\
        \hline \hline 
        (a)    & $A^{\frac{1}{2}}D_N^{\frac{1}{2}} (w_m)$ 
        &  $L^\infty(I; H_0)\cap L^2(I; H_1)$  
        &   $O(1)$ \\
        \hline       
        (b)    & $D_N^{\frac{1}{2}} (w_m)$ 
        &  $L^{\infty}(I; H_0)\cap L^2(I; H_1)$  
        &   $O(1)$  \\
        \hline
        (c)    & $D_N^{\frac{1}{2}} (w_m)$ 
        &   $L^{\infty}(I; H_1)\cap L^2(I; H_2)$  
        &   $O(\alpha^{-1})$ \\
        \hline
        (d)    & $ w_m$ 
        &   $L^{\infty}(I; H_0)\cap L^2(I; H_1)$  
        &   $O(1)$ \\
        \hline
        (e)    & $ w_m$
        &   $L^{\infty}(I; H_1)\cap L^2(I; H_2)$  
        &   $O(\alpha^{-1})$  \\
        \hline
        (f)    & $D_N (w_m)$
        &   $L^{\infty}(I; H_0)\cap L^2(I; H_1)$  
        &   $O(1)$ \\
        \hline
        (g)    & $D_N (w_m)$
        &   $L^{\infty}(I; H_1)\cap L^2(I; H_2)$  
        &   $O(\frac{\sqrt{N+1}}{\alpha})$ \\
        \hline
        (h)    & $\D_t  w_m$
        &  $L^2(I; H_0)$  
        &  $O(\alpha^{-1})$\\ 
        \hline  
      \end{tabular}
    }
  \end{equation}
  and
  \begin{equation} \label{table-rho} {
      \begin{tabular}{|@{\,}l@{\,}|@{\,}l@{\,}|@{\,}l@{\,}|@{\,}l@{\,}|@{\,}l@{\,}|
          @{\,}l@{\,}|}
        \hline
        Label   &   Variable & Bound-A  &  Order-Bound-A & Bound-B   & 
        Order-Bound-B  \\
        \hline \hline 
        (a)    & $A^{\frac{1}{2}}D_N^{\frac{1}{2}} (\rho_m)$ 
        &   $L^\infty(I; L^2)$
        &   $O(1)$
        &   $L^2(I; H^1)$  
        &   $O({\sqrt{\epsilon}}^{-1}) $    \\
        \hline       
        (b)    & $D_N^{\frac{1}{2}} (\rho_m)$ 
        &   $L^{\infty}(I; L^2)$ 
        &   $O(1)$ 
        &   $L^2(I; H^1)$  
        &  $O(\sqrt{\epsilon}^{-1})$   \\
        \hline
        (c)    & $D_N^{\frac{1}{2}} (\rho_m)$ 
        &  $L^{\infty}(I; H^1)$
        &  $O(\alpha^{-1})$ 
        &  $L^2(I; H^2)$  
        &  $O((\alpha\sqrt{\epsilon})^{-1})$   \\
        \hline
        (d)    & $ \rho_m$ 
        &   $L^{\infty}(I; L^2)$
        &   $O(1)$ 
        &   $L^2(I; H^1)$  
        & $O(\sqrt{\epsilon}^{-1})$   \\
        \hline
        (e)    & $ \rho_m$
        &   $L^{\infty}(I; H^1)$   
        &   $O(\alpha^{-1})$
        &   $L^2(I; H^2)$  
        & $O((\alpha\sqrt{\epsilon})^{-1})$    \\
        \hline
        (f)    & $D_N (\rho_m)$
        &   $L^{\infty}(I; L^2)$  
        &   $O(1)$
        &   $L^2(I; H^1)$          
        & $O(\sqrt{\epsilon}^{-1}) $    \\
        \hline
        (g)    & $D_N (\rho_m)$
        &   $L^{\infty}(I; H^1)$ 
        &   $O(\frac{\sqrt{N+1}}{\alpha})$
        &   $L^2(I; H^2)$  
        &   $O(\frac{\sqrt{N+1}}{\alpha\sqrt{\epsilon}})$   \\
        \hline
        (h)    & $\D_t  \rho_m$
        &   $L^2(I; L^2)$  
        &  $O(\alpha^{-1})$
        &
        & \\ 
        \hline  
      \end{tabular}
    }
  \end{equation}
  \indent $\bullet\,\, Checking$ (\eqref{table}-(a)) $and$
  (\eqref{table-rho}-(a)): These bounds are a straightforward
  consequence of inequalty \eqref{eq:first-bound}.  In particular,
  (\eqref{table-rho}-(a)) means that $\ADN (\rho_m) \in L^\infty(I,
  L^2(\T^3))$ and that $\sqrt{\epsilon}\nabla \ADN (\rho_m) \in L^2(I,
  L^2(\T^3))$ both with order $O(1)$.

  \indent $\bullet\,\, Checking$
  (\eqref{table}-(b))-(\eqref{table}-(c)) $and$
  (\eqref{table-rho}-(b))-(\eqref{table-rho}-(c)): Let $v\in
  H_2$. Then, with obvious notations one has
  \begin{equation*}
    \|A^{\frac{1}{2}}v\|^2 = \sum_{k\in  {\mathcal{T}}^\star_3}
    (1+ \alpha^2 |k|^2) |\widehat{v}_k|^2 =\|v\|^2 + \alpha^2 \|\nabla v\|^2.
  \end{equation*}
  It suffices to apply this identity to $v = D_N^{\frac{1}{2}}(w_m)$
  and to $v = \D_iD_N^{\frac{1}{2}}(w_m)$, $i = 1, 2, 3$, in
  \eqref{eq:first-bound} to get the claimed results. The same
   considerations can be used for $\rho_m$ to prove  (\eqref{table-rho}-(b))
    and (\eqref{table-rho}-(c)).

  \indent $\bullet\,\, Checking$
  (\eqref{table}-(d))-(\eqref{table}-(e)) $and$
  (\eqref{table-rho}-(d))-(\eqref{table-rho}-(e)): These bounds are
  consequence of (\eqref{table}-(b))-(\eqref{table}-(c))
  (respectively, \eqref{table-rho}-(b))-(\eqref{table-rho}-(c)))
  combined with \eqref{eq:dn-1}, which give
  \begin{equation*}
    \|v\|_s \leq \|D_N(v)\|_s \leq (N +1)\|v
    \|_s,
  \end{equation*}
  for general $v$ and for any $s \geq 0$. In particular, for the case
  of (\eqref{table-rho}-(d)), we obtain that $\rho_m \in L^\infty(I,
  L^2(\T^3))$ and that $\sqrt{\epsilon}\nabla \rho_m \in L^2(I,
  L^2(\T^3))$ both with order $O(1)$. Similarly, for the case of
  (\eqref{table-rho}-(e)), it follows that $\rho_m \in L^\infty(I,
  H^1)$ and that $\sqrt{\epsilon}\nabla \rho_m \in L^2(I, H^1)$ both
  with order $O(\alpha^{-1})$.

  \indent $\bullet\,\, Checking$ (\eqref{table}-(f)) $and$
  (\eqref{table-rho}-(f)): The operator
  $A^{\frac{1}{2}}D^{\frac{1}{2}}_N$ has for symbol $(1 +
  \alpha^2|k|^2)\varrho^{\frac{1}{2}}_{N,k}$ while the the one of
  $D_N$ is $(1 + \alpha^2|k|^2)\varrho_{N,k}$.  Since $0\leq
  \varrho_{N,k} \leq 1$, then $\|D_N(v)\|_s \leq \|A^{\frac{1}{2}}
  D^{\frac{1}{2}}_N (v)\|_s$, for general $v$ and for any $s \geq 0$.
  Hence, the bounds follow as a consequence of (\eqref{table}-(a)) and
  (\eqref{table-rho}-(a)).
 
  \indent $\bullet\,\, Checking$ (\eqref{table}-(g)) $and$
  (\eqref{table-rho}-(g)): These relations follow directly from
  (\eqref{table}-(e)) (respectively, (\eqref{table-rho}-(e))) used
  together with \eqref{eq:dn-1}.

  \indent $\bullet\,\, Checking$ (\eqref{table}-(h)) $and$
  (\eqref{table-rho}-(h)):
  Using $\D_tw_m\in \mathbf{V}_m$ and $\D_t\rho_m \in V^m$ as test
  functions in the equations \eqref{weak-form-m-1} and
  \eqref{weak-form-m-2} respectively, we get
  \begin{equation} \label{eq:stime-derivate-m}
    \left\{ \begin{array}{l} \displaystyle \| \D_t w_m\|^2 +
        \int_{\mathbb{T}^3} A_{N, m} \cdot \D_t w_m + \frac{\nu}{2}
        \frac{d}{dt}\| \nabla w_m \|^2
        =   \int_{\mathbb{T}_3}\rho_m e_3 \cdot \D_t w_m,\\[1.5 em]
        \displaystyle \| \D_t \rho_m \|^2 + \int_{\mathbb{T}_3} B_{N,
          m} \cdot \D_t \rho_m + \frac{\epsilon}{2} \frac{d}{dt} \|
        \nabla \rho_m\|^2=0,
      \end{array}\right.
  \end{equation}
  where
  \begin{align*}
    &A_{N, m}:=   G\big( \nabla \cdot [D_N(w_m) \otimes D_N (w_m)]\big),\\
    &B_{N, m}:= G\big( \nabla \cdot [D_N(\rho_m) D_N (w_m)]\big).
  \end{align*}
  To estimate the time derivatives, we need to bound the the bi-linear
  terms $A_{N, m}$ and $B_{N, m}$. The former can be treated as done
  in \cite{Be-Lew-2012}. In fact, observing that $D_N (w_m) \in L^4(0,
  T; L^3(\mathbb{T}^3)^3)$ with order $O(1)$, we obtain that
  $D_N(w_m)\otimes D_N(w_m) \in L^2(0,T;
  L^{\frac{3}{2}}(\mathbb{T}^3)^9)$ with order $O(1)$. Further, we also
  have that $D_N(\rho_m)D_N(w_m) \in L^2(0,T;
  L^{\frac{3}{2}}(\mathbb{T}^3)^3)$ with order $O(1)$.  Indeed, by
  applying H\"older's inequality and the embedding $H^1\hookrightarrow
  L^6(\T^3)$, we get
  \begin{align*}
    \int_0^T\bigg[\int_{\T^3}|D_N (\rho_m)|^{\frac{3}{2}} |D_N
    (w_m)|^{\frac{3}{2}}\bigg]^{\frac{4}{3}} &\leq \int_{0}^T\|D_N
    (\rho_m)\|^{2}
    \|D_N (w_m)\|^{2}_{L^6(\T^3)}\\
    &\leq \|D_N (\rho_m)\|^2_{L^\infty(0,T; L^2(\T^3))} \|D_N
    (w_m)\|^2_{L^2(0,T; H^1)},
  \end{align*}
  and the conclusion follows recalling (\eqref{table}-(d)) and
  (\eqref{table-rho}-(f)).
  
  Since the operator $(\nabla\cdot ) \circ \, G$ has symbol
  corresponding to the inverse of one space derivative, and its norm
  is of order $O(\alpha^{-1})$, it follows that $A_{N, m}\in L^2(0, T;
  W^{1,\frac{3}{2}}(\mathbb{T}^3)^3)$ as well as $B_{N, m}\in L^2(0, T;
  W^{1,\frac{3}{2}}(\mathbb{T}^3))$ both with order
  $O(\alpha^{-1})$. Moreover, these bound yield $A_{N,m}\in L^2([0,
  T]\X \T^3)^3$ as well as $B_{N, m} \in L^2([0, T]\X \T^3)$ since $W^{1,3/2}(\T^3) \hookrightarrow
  L^2(\T^3)$ and the space $L^2(0, T; L^2(\T^3))$ is isomorphic to
  $L^2([0,T ]\X \T^3)$.  Thus, from \eqref{eq:stime-derivate-m}, with
  standard calculations we obtain
  \begin{align*}
    \|\D_t w_m \|^2 + \|\D_t\rho_m &\|^2 + \frac{\nu}{2}
    \frac{d}{dt}\| \nabla w_m \|^2 +
    \frac{\epsilon}{2} \frac{d}{dt}\| \nabla \rho_m \|^2 \\
    &\leq \| \rho_m \|^2 + \frac{1}{4} \|\D_t w_m \|^2 +
    \int_{\mathbb{T}^3} |A_{N, m} \cdot \D_t w_m|
    + \int_{\mathbb{T}^3} |B_{N, m} \cdot  \D_t \rho_m|\\
    &\leq \| \rho_m \|^2 + \frac{1}{2} \|\D_t w_m \|^2 +
    \frac{1}{2}\|\D_t \rho_m|^2 + \|A_{N, m}\|^2 + \frac{1}{2}\|B_{N,
      m}\|^2, 
  \end{align*}
  and hence \vspace{-0.08 cm}
  \begin{align*}
    \|\D_t w_m \|^2 + \|\D_t\rho_m\|^2 + \nu \frac{d}{dt}\| \nabla w_m
    \|^2 +&
    \epsilon \frac{d}{dt}\|  \nabla \rho_m \|^2 \\
    &\leq 2\| \rho_m \|^2 + 2\|A_{N, m}\|^2 + \|B_{N, m}\|^2.
  \end{align*}
  Since $\|\nabla w_m(0)\|= \|\nabla \mathbf{P}_m(\overline{u_0})\|
  =\|\mathbf{P}_m\nabla (\overline{u_0})\|\leq C\alpha^{-1} \|u_0\|$
  as well as $\|\nabla \rho_m(0)\| = \|P_m\nabla
  (\overline{\theta_0})\| \leq C\alpha^{-1} \|\theta_0\|$, then
  recalling (\eqref{table}-(e)) and (\eqref{table-rho}-(e)), the
  bounds in (\eqref{table}-(h)) and (\eqref{table-rho}-(h)) follow
  easily.

  \indent $\mathbf{Step\,\, 4}$: \emph{Taking the limit in the
    equations for} $m\to+\infty$, \emph{when $N$ and $\epsilon$ are
    fixed}.  Thanks to the bounds in \eqref{table}, we can extract
  from $\{(w_m, \rho_m)\}_{m\in \N}$ a sub-sequence (still denoted
  $\{(w_m, \rho_m)\}_{m\in \N}$) which converge to $(w, \rho)$ such
  that $w\in L^\infty(0,T ;H_1) \cap L^2(0,T; H_2)$ and $\rho\in
  L^\infty(0,T; H^1) \cap L^2(0,T; H^2)$.  Using Aubin-Lions theorem,
  by ((\eqref{table})-(d)) and ((\eqref{table})-(h)), we get
  \begin{align}
    &\left. \begin{array}{ll} 
        w_m \to w  &
        \left\{ \begin{array}{l}        \textrm{ weakly in } L^2(0, T; H_2),\\
            \textrm{ strongly in } L^p(0, T ; H_1), \forall p \in [1,\infty[,
          \end{array} \right.
      \end{array}\right. \label{eq:convergence-weak-w-rho}
    \\
    &\left. \begin{array}{ll} 
            \rho_m \to \rho &
         \left\{ \begin{array}{l} 
  \textrm{ weakly in } L^2(0, T; H^2),\\
        \textrm{ strongly in } L^p(0, T ; H^1),
        \forall p \in [1,\infty[,
      \end{array}\right.
    \end{array}\right. \label{eq:convergence-strong-w-rho}\\
    &            \D_tw_m \to \D_tw   \textrm{ weakly in } L^2(0, T; H_0), \\
   &         \D_t\rho_m \to \D_t\rho    \textrm{ weakly in } L^2(0, T;
        L^2(\T^3)).
     \end{align}
  As a direct consequence of these convergences types, it follows that
  $(w, \rho)$ satisfies \eqref{eq:w-rho-reg}-\eqref{eq:Dt-w-rho-reg}.
  From \eqref{eq:convergence-weak-w-rho}-\eqref{eq:convergence-strong-w-rho}
  and the continuity of $D_N$
  in $H^s$ and $H_s$, we have that $D_N (w_m)$ and  $D_N (\rho_m)$ 
are strongly convergent,  respectively, to $D_N (w)$ and $D_N (\rho)$ in $L^4([0, T ] \X
  \T^3)$. Hence, the corresponding bi-linear terms $A_{N, m}$
  and $B_{N, m}$ converge strongly, respectively, to $D_N
  (w)\otimes D_N(w)$ and $D_N (\rho)D_N(w)$ in $L^2([0, T ] \X \T^3)$.
  This is sufficient to pass to the limit in the weak formulation
  \eqref{weak-form-m-1}-\eqref{weak-form-m-2} (see \cite{Be-Lew-2012})
  and to get that: For all $(v, h)\in L^2(0, T; H_1)\X L^2(0, T; H^1)$
  \begin{align*}
    & \begin{aligned} \int_0^T \int_{\mathbb{T}_3} \D_t w & \cdot v -
      \int_0^T \int_{\mathbb{T}_3} G\big( D_N(w) \otimes
      D_N (w)\big) : \nabla v \\
      & + \nu \int_0^T \int_{\mathbb{T}_3} \nabla w : \nabla v +
      \int_0^T \int_{\mathbb{T}_3} \nabla q \cdot v =
      \int_0^T\int_{\mathbb{T}_3}\rho e_3 \cdot v,
    \end{aligned}\\[2 mm]
    & \int_0^T \int_{\mathbb{T}_3} \D_t \rho \cdot h - \int_0^T
    \int_{\mathbb{T}_3} G\big( D_N(\rho) D_N (w)\big) \cdot \nabla h +
    \epsilon \int_0^T \int_{\mathbb{T}_3} \nabla \rho \cdot \nabla h
    =0.
  \end{align*}

  Now, in order to recover the pressure, we take the divergence of the
  equation for $w$ in \eqref{eq:Bouss-approx}, to get
  \begin{equation} \label{DeRham-1} \Delta q = \nabla \cdot ( \rho e_3
    + A_N ) ,
  \end{equation}
  where
  \begin{equation*}
    A_N := −G\big( \nabla \cdot [D_N (w)
    \otimes D_N (w)]\big).
  \end{equation*}
  Moreover, $\rho \in L^2(0, T; H^1)$ (and much more) and, due to the
  regularity of $w$, we have that $A_N \in L^2([0,T] \X \T^3)^3$ as well
  as $\Delta w \in L^2([0,T ] \X \T^3)^3$.  Then, the elements $v \in
  L^2(0,T; H_0)$ are admissible test fields for the weak formulation
  for $w$, given by \eqref{weak-form-1}, that we rewrite as follows:
  For all $v \in L^2(0,T; H_0)$, it holds true that
  \begin{equation} \label{DeRham-2}
    \int^T_0\int_{\T^3}\big(\D_t w + A_N -
    \nu \Delta w - \rho e_3\big) \cdot v dx ds = 0,
  \end{equation}
  and in particular the vector field in parentheses is orthogonal to
  divergence-free vector fields in $L^2(\T^3)$.  Hence, using
  \eqref{DeRham-1} together with \eqref{DeRham-2} and the regularity
  of $A_N$, by De Rham's Theorem one retrieves the pressure field $q
  \in L^2(0, T ; H^1)$.  \smallskip

  Lastly, we have that the energy inequality \eqref{energy-equality-2}
  holds true.  Indeed, due to the obtained regularity for $(w, \rho,
  q)$, we can use $\big(AD_N(w), AD_N(\rho)\big)$ as a test in the
  weak formulation \eqref{weak-form-1}-\eqref{weak-form-2}.
  Therefore, it follows easily that $\big(\ADN (w), \ADN (\rho)\big)$
  satisfies \eqref{energy-equality-2}.

  \indent $\mathbf{Step\,\, 5}$: \emph{Uniqueness.}  Let $(w_1,
  \rho_1, q_1)$ and $(w_2, \rho_2, q_2)$ be two solutions of
  \eqref{eq:Bouss-approx} and consider the differences $W := w_1 -
  w_2$ and $\Gamma := \rho_1 -\rho_2$. We will use $AD_N(W)$ and
  $AD_N(\Gamma)$ as test functions for the equations satisfied by $W$
  and $\Gamma$ respectively. Taking the inner product of the equation
  for $W$ against $A D_N (W)$ and integrating by parts, we get
  \begin{subequations}
    \begin{align*}
      &\begin{aligned} \frac{1}{2} &\frac{d}{dt} \| \ADN (W)\|^2
        + \nu \|\nabla \ADN (W)\|^2\\
        & \leq \int_{\mathbb{T}^3}|\nabla D_N (w_1)| |D_N (W)|^2 +
        \int_{\mathbb{T}^3} | \ADN (\Gamma)|
        |\ADN (W)|\\
        & \leq \|D_N^{\frac{1}{2}}(W)\|_{L^4}^2 \|\nabla D_N (w_1)\| +
        \frac{1}{2}\| \ADN (W)\|^2
        + \frac{1}{2} \| \ADN  (\Gamma)\|^2\\
        &\leq \|D_N(W)\|^{\frac{1}{2}} \|\nabla D_N
        (W)\|^{\frac{3}{2}} \|\nabla D_N (w_1)\| + \frac{1}{2} \| \ADN
        (W)\|^2+ \frac{1}{2} \| \ADN (\Gamma)\|^2,
      \end{aligned}
      \intertext{Similarly, taking the product of the equation for
        $\Gamma$ against $AD_N(\Gamma)$, we get} &\begin{aligned}
        \frac{1}{2}\frac{d}{dt} \| \ADN &\Gamma\|^2 + \epsilon
        \|\nabla
        \ADN \Gamma\|^2\\
        &\leq \int_{\mathbb{T}^3} |D_N(W)|\,
        |\nabla D_N (\rho_1)|\, | D_N(\Gamma)| \\
        &\leq \|D_N(\Gamma)\|_{L^4}
        \|D_N(W)\|_{L^4}\|\nabla D_N(\rho_1)\|\\
        &\leq \| D_N(\Gamma)\|^{\frac{1}{4}} \|\nabla D_N
        (\Gamma)\|^{\frac{3}{4}} \| D_N(W)\|^{\frac{1}{4}} \|\nabla
        D_N (W)\|^{\frac{3}{4}}\|\nabla D_N (\rho_1)\|.
      \end{aligned}
    \end{align*}
  \end{subequations}
  Now, adding the two above inequalities, and recalling that
  \begin{subequations}
    \begin{align*}
      &\|D_N(w)\|\leq \|A^{\frac{1}{2}}D^{\frac{1}{2}}_N(w)\|,\,\,
      \|\nabla D_N(w)\|\leq \|\nabla A^{\frac{1}{2}}D^{\frac{1}{2}}_N(w)\|,\\
      &\|D_N(\rho)\|\leq
      \|A^{\frac{1}{2}}D^{\frac{1}{2}}_N(\rho)\|,\,\, \|\nabla
      D_N(\rho)\|\leq \|\nabla
      A^{\frac{1}{2}}D^{\frac{1}{2}}_N(\rho)\|,
    \end{align*}
  \end{subequations}
  that $D_N$ and $\nabla$ commute and that $\|D_N\| = (N + 1)$, we
  then obtain
  \begin{align*}
    & \frac{1}{2}\frac{d}{dt}\Big(\| \ADN (W)\|^2 + \| \ADN
    \Gamma\|^2\Big) + \frac{\nu}{2} \|\nabla \ADN (W)\|^2 +
    \frac{\epsilon}{2} \|\nabla
    \ADN(\Gamma)\|^2 \\
    &\leq C(N + 1)^4 \Big(\!\sup_{t\geq 0} \|\nabla w_1\|^4\!\Big)
    \Big[ \frac{1}{\nu^3} \|A^{\frac{1}{2}}
    D_N^{\frac{1}{2}}(W)\|^2\Big] \!+\!  \frac{1}{2}\|A^{\frac{1}{2}}
    D_N^{\frac{1}{2}}(W)\|^2 \!+\!  \frac{1}{2}\|A^{\frac{1}{2}}
    D_N^{\frac{1}{2}}(\Gamma)\|^2\\
    & \quad + C(N + 1)^4 \Big(\sup_{t\geq 0} \|\nabla \rho_1\|^4\Big)
    \frac{1}{\nu^{3/2}\epsilon^{3/2}} \Big[\|A^{\frac{1}{2}}
    D_N^{\frac{1}{2}} W\|^2 + \|\ADN(\Gamma)\|^2\Big].
  \end{align*}
  Hence, we get
  \begin{equation*}
    \frac{1}{2}\frac{d}{dt}\Big(\| \ADN (W)\|^2 +
    \| \ADN (\Gamma) \|^2\Big) \leq
    M\Big(\| \ADN (W)\|^2 +
    \| \ADN (\Gamma)\|^2\Big),
  \end{equation*}
  where $M=M\big(N, \nu^{-1}, \epsilon^{-1}, \|\rho_1\|_{L^\infty(0,
    T; H^1)}), \|w_1\|_{L^\infty(0, T; H^1)}\big)$. Therefore, the
  conclusion follows by using the Gronwall's inequality and recalling
  that $W(0)=0$ as well as $\Gamma(0)=0$.
\end{proof}

\section{Convergence to the solutions of \eqref{eq:Bouss-mean} when $N
  \to +\infty$} \label{limit}
As a result of the previous section, we have a disposal a sequence of
solutions $\{(w_N, \rho_N, q_N)\}_{N\in \N}$ to
\eqref{eq:Bouss-approx} that actually depends on $N$,
$\alpha$ and $\epsilon$.  Here, we prove our main result,
Theorem~\ref{main}, which states that, taking $\epsilon$
such that $\epsilon\to 0$ as $N\to +\infty$, then the sequence
$\{(w_N, \rho_N, q_N)\}_{N\in\N}$ is compact, in a suitable sense, and
converges to a solution of the mean Boussinesq
problem \eqref{eq:Bouss-mean}, as $N\to +\infty$.

\begin{proof}[Proof of Theorem~\ref{main}]
  We will show that the weak formulation
  \eqref{weak-form-1}-\eqref{weak-form-2} converges to the weak
  formulation of \eqref{eq:Bouss-mean} as $N \to +\infty$, and that
  the limiting functions satisfy the claimed properties.  We divide
  the proof into three steps.

  \indent $\mathbf{Step\, 1}$: \emph{Estimates uniform in $N$}. To get
  compactness properties about the sequences $\big\{\big(\wen,
  \ren\big) \big\}_{N\in\N}$ and $\big\{\big(D_N (\wen ), D_N
  (\ren)\big)\}_{N\in\N}$, we will provide some additional bounds.
  Using the same notation of the previous section, we quote in the
  following tables the estimates that we will use for passing to the
  limit, as $N \to +\infty$. 
Let $I=[0, T]$, $T>0$, we have
  \begin{equation} \label{table1}
    \begin{tabular}{|@{\,}l@{\,}|@{\,}l@{\,}|@{\,}l@{\,}|@{\,}l@{\,}|}
      \hline
      Label   &   Variable & Bound   &    Order \\
      \hline \hline 
      (a)    
      & $w_N$
      &  $L^{\infty}(I; H_0)\cap L^2(I; H_1)$  
      &   $O(1)$ \\
      \hline
      (b)    
      & $w_N$
      &  $L^{\infty}(I; H_1)\cap L^2(I; H_2)$  
      &   $O(\alpha^{-1})$ \\
      \hline
      (c)    
      & $D_N(w_N)$
      &  $L^{\infty}(I; H_0)\cap L^2(I; H_1)$  
      &   $O(1)$ \\
      \hline
      (d)    
      & $\D_t w_N$
      &  $L^2(I; H_0)$  
      &   $O(\alpha^{-1})$ \\
      \hline
      (e)    
      & $\D_t D_N (w_N)$
      &  $L^{4/3}(I; H_{-1})$  
      &   $O(1) $  \\ 
      \hline
      (f) 
      & $q_N$ 
      & $L^2(I ; H^1) \cap L^{5/3}(I ; W^{2,5/3})$ 
      & $O(\alpha^{-1})$ \\
      \hline
    \end{tabular} 
  \end{equation} 
  and
  \begin{equation} \label{table1-rho}
    \begin{tabular}{|@{\,}l@{\,}|@{\,}l@{\,}|@{\,}l@{\,}|@{\,}l@{\,}|}
      \hline
      Label   &  Variable  & Bound  & Order  \\
      \hline \hline 
      (a)    
      &$\rho_N$    
      &  $L^{\infty}(I; L^2)$
      &  $O(1)$ \\
      \hline
      (b)    
      & $\rho_N$  
      &  $L^{\infty}(I; H^1)$
      &   $O(\alpha^{-1})$\\
      \hline
      (c)  
      &   $D_N(\rho_N)$  
      &  $L^{\infty}(I; L^2)$
      &   $O(1)$\\
      \hline
      (d)   
      &  $\D_t\rho_N$    
      &  $L^2(I; L^2)$ 
      &  $O(\alpha^{-1})$\\
      \hline
      (e) 
      &  $\D_tD_N (\rho_N)$   
      &  $L^{2}(I; H^{-2})$  
      &   $O(1)$\\   
      \hline
    \end{tabular} 
  \end{equation} 
  \indent $\bullet\,\, Checking$
  (\eqref{table1}-(a))-(\eqref{table1}-(d)) and
  (\eqref{table1-rho}-(a))-(\eqref{table1-rho}-(d)): These bounds are
  direct consequences of those in
  (\eqref{table}-(a))-(\eqref{table}-(d)) and
  \eqref{table-rho}-(b))-(\eqref{table-rho}-(d)), respectively.

  \indent $\bullet\,\, Checking$ (\eqref{table1}-(e)) and
  (\eqref{table1-rho}-(e)): Let be given $v\in L^4(0, T; H_1)$ and
  $h\in L^2(0, T; H^2)$.  We use $D_N(v)$ and $D_N (h)$ as test
  functions. Since $D_N$ commute with differential operators, $G$ and
  $D_N$ are self-adjoint, then classical integrations by parts give
  \begin{align}
    & \label{eq:Dt-DN-w}
    \begin{aligned}
      \big(\D_t w_N,  D_N  (v)\big) = & \big(\D_tD_N (w_N), v\big)\\
      = &\nu\big(\Delta w_N, D_N (v)\big) + \big(D_N (w_N)
      \otimes D_N (w_N), GD_N (\nabla v)\big) \\
      &+ \big(D_N (\rho_N)e_3, v\big),
    \end{aligned}\\[0.2 em]
    & \label{eq:Dt-DN-rho}
    \begin{aligned}
      \big(\D_t\rho_N, \, D_N (h)\big) & = \big(\D_tD_N (\rho_N ), h\big)\\
      & = \epsilon\big(\Delta \ren, D_N(h)\big) + \big(D_N (\rho_N )
      D_N (w_N ), GD_N (\nabla h)\big).
    \end{aligned}
  \end{align}

  Let us consider \eqref{eq:Dt-DN-w}.  Using the duality pairing
  $\langle \cdot, \cdot \rangle$ between $H^1$ and $H^{-1}$ and
  standard estimates we get
  \begin{align*}
    &|(\Delta w_N , D_N (v))| = |(\nabla D_N (w_N ),\nabla v)|
    \leq C_1(t)\|v\|_1,\\
    &| (D_N (\rho_N)e_3, v) | = |\langle D_N (\rho_N)e_3, v \rangle |
    \leq \|D_N (\rho_N)\|\|v\|_1 \leq C_2(t)\|v\|_1.
  \end{align*}
  Observe that, the $L^2(0, T; H_1)$-bound for $D_N (w_N)$ together
  with the $L^2(0, T; L^2(\T^3))$-bound for $D_N (\rho_N)$ imply that
  $C_1, C_2 \in L^2(0, T )$ with order $O(1)$.  Again, from
  (\eqref{table1}-(c)) and usual interpolation inequalities, we obtain
  that $D_N(w_N)$ belongs to $L^{8/3}(0,T; L^4(\T^3)^3)$, which yields
  \begin{align*}
    & D_N(w_N)\otimes D_N(w_N)\in L^{4/3}(0,T ; L^2(\T^3)^9).
  \end{align*}
  Hence, using the this bound together with $\|GD_N(\nabla v)\|\leq
  \|\nabla v\|$, we get
  \begin{equation*}
    |(D_N(w_N)\otimes D_N(w_N),  GD_N (\nabla v))|\leq C_3(t)\|v\|_1,
  \end{equation*}
  where $C_3\in L^{4/3}(0,T)$, and these estimates are uniform in
  $N\in \N$.  Therefore, using all the above bounds we get, uniformly
  in $N$,
  \begin{align*}
    |(\D_t D_N (w_N ), v)| \leq \big(\nu C_1(t) + C_2(t) + C_3(t)
    \big) \|v\|_1, 
  \end{align*}
  with $\big(\nu C_1 + C_2 + C_3 \big) \in L^{4/3}(0, T )$. This
  proves (\eqref{table1}-(e)).

  Now, take into account \eqref{eq:Dt-DN-rho}. For $h\in L^2(0, T;
  H^2)$, we have that
  \begin{align*}
    & |(\Delta \rho_N , D_N (h))| = |( D_N (\rho_N ),\Delta v)|
    \leq \|D_N (\rho_N)\|\|h\|_2 \leq C_4(t)\|h\|_2, 
    \intertext{where  $C_4\in L^2(0, T)$ with order $O(1)$.
      Since $D_N(\rho_N)D_N(w_N)\in L^2(0,T ;
      L^{3/2}(\T^3)^3)$ and $W^{2,2}\hookrightarrow W^{1, 3}$, we also
      have that} &\begin{aligned} |(D_NG\big((\rho_N) D_N(w_N)\big),
      \nabla h)| &=
      |\langle D_N G\big(D_N(\rho_N) D_N(w_N)\big),\nabla h \rangle |\\
      &\leq \|D_N G\big(D_N(\rho_N) D_N(w_N)\big)\|_{L^{3/2}(\T^3)}
      \|\nabla h\|_{L^3(\T^3)} \\
      &\leq C\|\big(D_N(\rho_N) D_N(w_N)\|_{L^{3/2}(\T^3)}
      \|\nabla h\|_{L^3(\T^3)} \\
      &\leq C\|D_N(\rho_N)\| \|D_N(w_N)\|_{H^1} \|h\|_{H^2} \\
      &\leq C_5(t)\|h\|_{H^2},
    \end{aligned}
  \end{align*}
  with $ C_5\in L^2(0, T)$ with order $O(1)$. Whence
  \begin{align*}
    |(\D_tD_N (\rho_N ), h)| &\leq (\epsilon C_4(t) + C_5(t
    ))\|h\|_{H^2}  \\
    & \leq (\sqrt{\epsilon}C_4(t) + C_5(t))
    \|h\|_{H^2}, 
  \end{align*}
  where in the last step we used that $0<\epsilon<1$.  Here
  $(\sqrt{\epsilon}C_4 + C_5) \in L^2(0, T)$, hence
  (\eqref{table1-rho}-(e)) follows.

  \indent $\bullet\,\, Checking$ (\eqref{table1}-(f)): To obtain
  further regularity properties of the pressure we use again the
  bounds listed in \eqref{table}. Let $A_N= −G\big( \nabla \cdot [D_N
  (w_N) \otimes D_N (w_N)]\big)$ be the bi-linear form introduced in
  the proof of Theorem~\ref{preliminary-results}.  From the estimates
  proved in the previous section we have that $\rho_N\in L^\infty(0,
  T; H^1)$ and that $A_N \in L^2([0, T]\X \T^3)^3$.  This gives the
  first bound in $L^2(0,T ; H^1)$ for $q_N$.  Moreover, classical
  interpolation inequalities used together with (\eqref{table1}-(c))
  yield $D_N (w_N ) \in L^{10/3}([0, T ] \X \T^3)^3$.  Therefore, $A_N
  \in L^{5/3}(0, T; W^{1,5/3}(\T^3)^3)$. Consequently, we have that
  $q_N$ is bounded in $L^2(0, T; H^1) \cap L^{5/3}(0, T; W^{2,5/3}(\T^3))$
  with order $O(\alpha^{-1})$.

  \indent $\mathbf{Step\, 2}$: \emph{Compact
    sub-sequence.} Thanks to the uniform
  estimates established in \eqref{table1} and \eqref{table1-rho}, one
  can use the Aubin-Lions compactness theorem (see, e.g.,
  \cite{Lions}) that provides the existence of a sub-sequence of
  $\{(\wen, \ren, q_N)\}_{N\in\N}$ (still denoted $\{(\wen, \ren,
  q_N)\}_{N\in\N}$) and functions $w$, $\rho$, $q$, $z$ and $\gamma$,
  with $\nabla\cdot w=0$ and $\nabla\cdot z=0$, such that
  \begin{align*}
    &\quad \quad\begin{aligned}
      &  w, z \in L^\infty(0, T; H_1) \cap L^2(0, T; H_2),\\
      &\rho, \gamma \in L^\infty(0, T; H^1),\\
      & q \in L^2(0, T; H^1) \cap L^{5/3}(0, T; W^{2,5/3}(\T^3)),
    \end{aligned}
    \intertext{and that} & \left.\begin{array}{ll}
        \left.\begin{array}{l} \wen \to w
          \end{array}\right.
        \left\{\begin{array}{l}
            \textrm{weakly in } L^2(0, T; H_2 ) \textrm{ and }
            \textrm{weakly}^\star \textrm{ in } L^\infty(0, T; H_1),\\
            \textrm{strongly in } L^2(0, T; H_1),
          \end{array}\right.
      \end{array} \right.\\
    &\left.\begin{array}{ll} \left.\begin{array}{l}
            \rho_N \to \rho 
          \end{array}\right.
        \left\{\begin{array}{l}
            \textrm{weakly in } L^2(0, T; H^1) \textrm{ and }
            \textrm{weakly}^\star \textrm{ in } L^\infty(0, T; H^1),\\
            \textrm{strongly in } L^2(0, T; L^2(\T^3)),
          \end{array}\right.
      \end{array} \right. \\
    &\quad\,\,  \D_t \wen \to \D_t w  \textrm{ weakly in }  
    L^2([0, T]\X \T^3), \\
    &\quad\,\,   \D_t\ren \to \D_t \rho  \textrm{ weakly in }  
    L^2([0, T]\X \T^3), \\
    \displaybreak[0]
    & \left.\begin{array}{ll}
        \left. \begin{array}{l}
            D_N (w_N ) \to z\\
          \end{array}\right. 
        \left\{ \begin{array}{l}    
            \textrm{weakly in } L^2(0, T; H_1) \textrm{ and } 
            \textrm{weakly}^\star \textrm{ in } L^\infty(0, T; H_0),\\
            \textrm{strongly in } L^p([0, T]\X \T^3), 
            \forall p \in [1, 10/3[,
          \end{array}\right.
      \end{array}\right.\\
    & \left.\begin{array}{ll} \left. \begin{array}{l} D_N (\rho_N )
            \to \gamma
          \end{array}\right. 
        \left\{ \begin{array}{l}    
            \textrm{weakly in } L^2(0, T; L^2(\T^3)) \textrm{ and } 
            \textrm{weakly}^\star \textrm{ in } L^\infty(0, T; L^2(\T^3)),\\
            \textrm{strongly in } L^2(0, T; H^{-1}), 
          \end{array}\right.
      \end{array}\right.\\
    &\quad\,\, \D_tD_N (w_N ) \to \D_t z
    \textrm{ weakly in } L^{4/3}(0,T; H_{-1}),\\
    &\quad\,\, \D_tD_N (\rho_N ) \to \D_t\gamma \textrm{ weakly in }
    L^2(0,T; H^{-2}).
  \end{align*}
  By exploiting
 the same identification of the limit used in
  \cite{Be-Lew-2012}, one can check that $z = A w$. Moreover, using
  the notation $(\cdot , \cdot )$ for the scalar product in $L^2([0,T
  ]\X \T^3)$, we have that
  \begin{equation*}
    (D_N(\rho_N), h) = (\rho_N, D_N(h)) \overset{N\to+\infty}{\longrightarrow}
    (\rho, Ah) \textrm{ for all } h\in  L^2(0,T; H^2),
  \end{equation*}
  and since $ D_N (\rho_N ) \to \gamma$ weakly in $L^2(0, T;
  L^2(\T^3))$ it follows that $(\gamma, h) = (\rho, Ah)$ and hence
  $\gamma = A\rho$, in the distributional sense.  Again, by 
the above convergences types and the interpolation theorem, we also have
  that $w\in C([0, T]; H_1)$ and $\rho\in C([0, T]; L^2(\T^3))$.

  \indent$\mathbf{Step\, 3}$: \emph{Taking the limit in the system.}  In order to
  pass to the limit in the weak formulation
  \eqref{weak-form-1}-\eqref{weak-form-2} we use the compactness
  properties satisfied by $\{D_N(w_N)\}_{N\in\N}$ and
  $\{D_N(\rho_N)\}_{N\in\N}$. Let us focus on the nonlinear terms.
  The convergence results from Step~2 imply that
  \begin{align*}
    &D_N (w_N ) \otimes D_N (w_N ) \to Aw\otimes Aw \textrm{ strongly
      in } L^p([0, T ] \X \T^3)^9, \,\, \forall p \in [1, 5/3[,\\
    \intertext{and that} &\quad \quad D_N (\rho_N ) D_N (w_N ) \to
    A\rho Aw \textrm{ in the sense of distribution}.
  \end{align*}
  We actually prove that the latter convergence is stronger:  For
  $h\in L^2(0, T; H^1)$, we have
  \begin{equation*}
    \begin{aligned}
      \Big|\int_0^T \Big( G\big( D_N(\rho_N) & D_N (w_N) - A\rho
      Aw\big),
      \nabla h \Big) \Big| \\
      \leq& \Big|\int_0^T \Big(D_N(\rho_N)\big( D_N (w_N) - Aw\big),
      \nabla G(h) \Big)\Big|\\
      &+ \Big|\int_0^T \Big(\big( D_N (\rho_N) - A\rho\big), Aw\cdot
      \nabla G(h) \Big)\Big|\\
      =& I_1^N + I_2^N.
    \end{aligned}
  \end{equation*}
  Now, $I_1^N \to 0$, as $N\to +\infty$, since $D_N(w_N) \to Aw$
  strongly in $L^2(0, T; H_0)$ and $D_N(\rho_N)$ is uniformly bounded
  in $L^\infty(0, T; L^2(\T^3))$.  Let us also prove that $I_2^N \to
  0$, as $N\to +\infty$.  
  
  First, observe that $ Aw\cdot \nabla G(h)
  \in L^2(0, T; L^2(\T^3))$. Indeed, recalling that the operator
  $(\nabla \cdot) \circ G$ makes to ``gain one derivative'', we have
  that $\nabla G(h)\in L^2(0, T; H^2)$, then using the H\"older's
  inequality and the embedding $W^{2,2}(\T^3)\hookrightarrow
  L^{\infty}(\T^3)$, we get
  \begin{align*}
    \int_0^T\int_{\T^3} |Aw|^2 |\nabla G(h)|^2&
    \leq \int_0^T \|Aw\|^2 \|\nabla G(h)\|^2_{L^\infty(\T^3)}\\
    & \leq C \int_0^T \|Aw\|^2 \|\nabla G(h)\|^2_{H^2} \\
    &\leq C \|Aw\|^2_{L^\infty(0, T; L^2(\T^3))}\|h\|^2_{L^2(0, T;
      H^1)}.
  \end{align*}
  Thus, thanks to the weak convergence of $D_N(\rho_N)\to A\rho$ in
  $L^2(0, T; L^2(\T^3))$ we have that $I_2^N\to 0$, as $N \to
  +\infty$.

  Since all the other terms in the weak equation pass easily to the
  limit in \eqref{weak-form-1} and \eqref{weak-form-2}, as $N \to
  +\infty$, we conclude the following: For all $(v, h)\in L^2(0, T;
  H_1)\X L^2(0, T; H^1)$
  \begin{align*}
    & \begin{aligned} \int_0^T \int_{\mathbb{T}_3} \D_t w & \cdot v -
      \int_0^T \int_{\mathbb{T}_3} G\big( A(w) \otimes
      A(w)\big) : \nabla v \\
      & + \nu \int_0^T \int_{\mathbb{T}_3} \nabla w : \nabla v +
      \int_0^T \int_{\mathbb{T}_3} \nabla q \cdot v =
      \int_0^T\int_{\mathbb{T}_3}\rho e_3 \cdot v,
    \end{aligned}  \\
    & \int_0^T \int_{\mathbb{T}_3} \D_t \rho \cdot h - \int_0^T
    \int_{\mathbb{T}_3} G\big( A(\rho) A(w) \big) \cdot \nabla h =0,
  \end{align*}
  which is the weak formulation for \eqref{eq:Bouss-mean}.
Hence, the conclusion follows.
\end{proof}

Finally, recasting the argument used in the proof of
\cite[Proposition~4.1]{Be-Lew-2012}, one proves the following result.

\begin{proposition} Let $u_0\in H_0$ and $\theta_0\in L^2(\T^3)$.  Let
  $(w, \rho, q)$ the weak regular solution to \eqref{eq:Bouss-mean}
  given by in Theorem~\ref{main} and let $u=Aw$ and
  $\theta=A\rho$. Then, it holds true that
  \begin{equation*}
    \frac{1}{2}\frac{d}{dt}
    \left(\|u\|^2 +
      \|\theta\|^2\right) +
    \nu\|\nabla u\|^2 \leq
    (\theta e_3, u \big).
  \end{equation*}
\end{proposition}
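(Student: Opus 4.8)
The plan is to obtain the stated inequality by passing to the limit $N\to+\infty$ in the energy equality \eqref{energy-equality-2} of Theorem~\ref{preliminary-results}, which every approximate solution $(w_N,\rho_N,q_N)$ satisfies; this is the Boussinesq analogue of the argument used in \cite[Proposition~4.1]{Be-Lew-2012}. First, for fixed $N$, I integrate \eqref{energy-equality-2} in time on $[0,t]$ and use $w_N(0)=\overline{u_0}$, $\rho_N(0)=\overline{\theta_0}$ to get, for a.e.\ $t\in[0,T]$,
\begin{equation*}
\begin{aligned}
&\frac{1}{2}\big(\|\ADN (w_N)(t)\|^2+\|\ADN (\rho_N)(t)\|^2\big)
+\nu\int_0^t\|\nabla \ADN (w_N)\|^2
+\epsilon\int_0^t\|\nabla \ADN (\rho_N)\|^2\\
&\qquad=\frac{1}{2}\big(\|\ADN (\overline{u_0})\|^2+\|\ADN (\overline{\theta_0})\|^2\big)
+\int_0^t\big(\ADN (\rho_N) e_3,\ADN (w_N)\big).
\end{aligned}
\end{equation*}

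Next I identify the limits of all the quantities above. Since $\ADN$ and $D_N$ have the same Fourier--multiplier limit $\widehat{A}_k=1+\alpha^2|k|^2$ as $N\to+\infty$, the convergences established in Step~2 of the proof of Theorem~\ref{main} transfer to $\ADN$: one has $\ADN (w_N)\rightharpoonup Aw=u$ and $\ADN (\rho_N)\rightharpoonup A\rho=\theta$, weakly in $L^2(0,T;L^2)$ and weakly$^\star$ in $L^\infty(0,T;L^2)$, the limits being identified exactly as $z=Aw$ and $\gamma=A\rho$ were identified there. By \eqref{eq:first-bound}, $\ADN (w_N)$ is bounded in $L^2(0,T;H_1)$ and $\ADN (\rho_N)$ in $L^\infty(0,T;L^2)$; moreover $\partial_t[\ADN (w_N)]=\ADN (\partial_t w_N)$ and $\partial_t[\ADN (\rho_N)]=\ADN (\partial_t\rho_N)$ are bounded in $L^2(0,T;H_{-2})$, by (\eqref{table1}-(d)) and (\eqref{table1-rho}-(d)) together with the fact that $\ADN\colon L^2\to H_{-2}$ is bounded uniformly in $N$. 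Hence the Aubin--Lions lemma upgrades the convergence of $\ADN (w_N)$ to a strong one in $L^2(0,T;L^2)$, and yields $\ADN (\rho_N)\to\theta$ in $C([0,T];H_{-1})$, so that $\ADN (\rho_N)(t)\rightharpoonup\theta(t)$ weakly in $L^2$ for every $t$. Finally, $\ADN (\overline{u_0})=\ADN G(u_0)$ has Fourier symbol $\varrho_{N,k}^{1/2}$, which increases to $1$, so monotone convergence gives $\|\ADN (\overline{u_0})\|^2\to\|u_0\|^2=\|u(0)\|^2$, and likewise $\|\ADN (\overline{\theta_0})\|^2\to\|\theta_0\|^2=\|\theta(0)\|^2$.

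With these facts I pass to the limit $N\to+\infty$, at a.e.\ fixed $t\in[0,T]$, in the integrated identity above. The $\epsilon$--term is nonnegative and is dropped. The left-hand kinetic terms survive in the limit as lower bounds: $\|\ADN (w_N)(t)\|^2\to\|u(t)\|^2$ along a subsequence for a.e.\ $t$ (from the strong $L^2(0,T;L^2)$--convergence), while $\liminf_N\|\ADN (\rho_N)(t)\|^2\geq\|\theta(t)\|^2$ (from the weak $L^2$--convergence at every $t$). The dissipation term obeys $\int_0^t\|\nabla u\|^2\leq\liminf_N\int_0^t\|\nabla \ADN (w_N)\|^2$ by weak lower semicontinuity, the initial terms converge by the previous step, and the buoyancy term converges, $\int_0^t(\ADN (\rho_N) e_3,\ADN (w_N))\to\int_0^t(\theta e_3,u)$, because $\ADN (w_N)\to u$ strongly while $\ADN (\rho_N)\rightharpoonup\theta$ weakly in $L^2((0,t)\X\T^3)$. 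Collecting everything yields
\begin{equation*}
\frac{1}{2}\big(\|u(t)\|^2+\|\theta(t)\|^2\big)+\nu\int_0^t\|\nabla u\|^2
\leq\frac{1}{2}\big(\|u_0\|^2+\|\theta_0\|^2\big)+\int_0^t\big(\theta e_3,u\big),
\end{equation*}
valid for a.e.\ $t$, which is the integrated form of the energy inequality in the statement.

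The main obstacle is this last step, and within it the passage to the limit in the quadratic buoyancy term $\int_0^t(\ADN (\rho_N) e_3,\ADN (w_N))$: it is this that forces one to produce \emph{strong} $L^2(0,T;L^2)$--compactness for $\{\ADN (w_N)\}_N$---via the uniform $L^2(0,T;H_1)$--bound coming from \eqref{eq:first-bound} and the uniform $L^2(0,T;H_{-2})$--bound on $\partial_t[\ADN (w_N)]$---and to check that the resulting limit is indeed $u=Aw$, which reuses the identification of weak limits carried out in the proof of Theorem~\ref{main}. Once that is secured, the remaining terms pass to the limit by routine weak/weak$^\star$ lower-semicontinuity arguments.
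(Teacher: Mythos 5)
Your proof is correct, and it follows the paper's overall strategy (integrate the energy equality \eqref{energy-equality-2}, get strong $L^2(0,T;L^2)$ compactness for $\ADN(w_N)$, pair it with the weak convergence of $\ADN(\rho_N)$ in the buoyancy term, and conclude by lower semicontinuity), but you reach the key compactness step by a genuinely different and more elementary route. The paper establishes a new uniform bound $\D_t\ADN(w_N)\in L^{4/3}(0,T;H_{-1})$ by testing the equation with $\ADN(v)$, which forces it to re-estimate the nonlinear term $\big(D_N(w_N)\otimes D_N(w_N),\,G\ADN(\nabla v)\big)$ (estimates \eqref{eq:DtADN-estimate}--\eqref{eq:Dt-ADN-nuova}), and then applies Aubin--Lions with the triple $H_1\hookrightarrow H_0\hookrightarrow H_{-1}$. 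You instead observe that $\D_t[\ADN(w_N)]=\ADN(\D_t w_N)$ and that $\ADN$ maps $L^2$ into $H^{-2}$ with norm bounded uniformly in $N$ (its symbol is dominated by $1+\alpha^2|k|^2$), so the already-available bound (\eqref{table1}-(d)) on $\D_t w_N$ immediately gives $\D_t[\ADN(w_N)]$ bounded in $L^2(0,T;H_{-2})$, and Aubin--Lions with $H_1\hookrightarrow L^2\hookrightarrow H_{-2}$ yields the same strong convergence \eqref{eq:utility-2}; the weaker negative-order space costs nothing here. Your treatment is also somewhat more explicit than the paper's at two points the paper compresses into ``lower semicontinuity of the norm and identification of the weak limit'': you justify the convergence of the initial terms by noting that $\ADN(\overline{u_0})$ has symbol $\varrho_{N,k}^{1/2}\nearrow 1$ (monotone convergence), and you obtain pointwise-in-time weak $L^2$ convergence of $\ADN(\rho_N)(t)$ via an additional $C([0,T];H^{-1})$ compactness argument, which legitimizes the liminf bound on $\|\ADN(\rho_N)(t)\|^2$ at (almost) every $t$. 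Both arguments prove the (integrated form of the) stated inequality; yours trades the paper's sharper $L^{4/3}(0,T;H_{-1})$ time-derivative estimate, which is not needed for this proposition, for a shorter path that reuses the bounds already in tables \eqref{table1} and \eqref{table1-rho}.
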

\begin{proof}
  Consider the energy equality \eqref{energy-equality-2} for the
  approximate model \eqref{eq:Bouss-approx}. The computations made in
  the proof of Theorem~\ref{preliminary-results}, together with the
  analysis performed in Theorem~\ref{main}, show also that
  \begin{align}
    & \left.\begin{array}{l}
        D^{\frac{1}{2}}_N(w_N ) \to A^{\frac{1}{2}} w \\
        \ADN(w_N ) \to Aw
      \end{array}\right\{ \left. \begin{array}{l}
        \textrm{weakly in } L^2(0, T; H_1)   \textrm{ and}\\
        \textrm{weakly}^\star 
        \textrm{ in }  L^\infty(0, T; H_0),
      \end{array}\right. \label{eq:utility-0}
    \\
    & \left.\begin{array}{l}
        D^{\frac{1}{2}}_N (\rho_N ) \to A^{\frac{1}{2}} \rho \\
        \ADN (\rho_N) \to A\rho
      \end{array}\right\{ \left. \begin{array}{l}
        \textrm{weakly in } L^2(0, T; L^2(\T^3)) \textrm{ and}\\
        \textrm{weakly}^\star 
        \textrm{ in }  L^\infty(0, T; L^2(\T^3)),
      \end{array}\right.  \label{eq:utility-00}
  \end{align}
  Further, we also have that $\D_t w_N\in L^{4/3}(0, T; H_{-1})$. In fact,
  taking $v\in L^4(0, T; H_1)$ and using $\ADN(v)$ as test function,
  we reach
  \begin{equation*} 
    \begin{aligned}
      (\D_t w_N,  \ADN (v)) =& (\D_t\ADN (w_N), v)\\
      =& \nu\big(\Delta w_N, \ADN (v)) - (D_N (w_N) \otimes D_N
      (w_N), G\ADN (\nabla v)\big)\\
      &+\big (D_N^{\frac{1}{2}} (\rho_N)e_3, A^{\frac{1}{2}}v\big).
    \end{aligned}
  \end{equation*}
  Thus, using 
standard estimates we get
  \begin{equation} \label{eq:DtADN-estimate}
    \begin{aligned}
      &|(\Delta w_N , \ADN (v))| = |(\nabla \ADN (w_N ),\nabla v)|
      \leq C_1(t)\|v\|_1,\\
      &| (D_N^{\frac{1}{2}} (\rho_N)e_3, A^{\frac{1}{2}}v) |
      \leq \|D_N^{\frac{1}{2}} (\rho_N)\|\|v\|_1 \leq C_2(t)\|v\|_1,\\
      &|(D_N(w_N)\otimes D_N(w_N), G\ADN (\nabla v))|\leq
      C_3(t)\|v\|_1,
    \end{aligned}
  \end{equation}
  where $C_1, C_2\in L^2(0,T)$ and $C_3\in L^{4/3}(0,T)$, and these
  estimates are uniform in $N\in \N$.  Therefore,
  using 
  \eqref{eq:DtADN-estimate} we obtain 
  \begin{equation} \label{eq:Dt-ADN-nuova} |(\D_t \ADN (w_N ), v)|
    \leq \big(\nu C_1(t) + C_2(t) + C_3(t) \big) \|v\|_1,
  \end{equation}
  with $\big(\nu C_1 + C_2 + C_3 \big) \in L^{4/3}(0, T )$.
  By appealing to the Aubin-Lions theorem, thanks to \eqref{eq:utility-0}
  and \eqref{eq:Dt-ADN-nuova}, we also have
  \begin{equation} \label{eq:utility-2} \ADN (w_N) \to Aw \textrm{
      strongly in } L^2(0, T; H_0).
  \end{equation}
  
  Now, as a consequence of \eqref{eq:utility-00} and
  \eqref{eq:utility-2} we get, for $t\in [0, T]$
  \begin{equation} \label{eq:bi-lin-conv} \int_0^t \Big(\ADN
    (\rho_N)e_3, \ADN (w_N)\Big)ds\, \overset{N\to
      +\infty}{\longrightarrow} \, \int_0^t (A\rho e_3, A w) ds.
  \end{equation} 
  In fact, we have that
  \begin{equation*}
    \begin{aligned}
      \bigg\vert \int_0^t & \Big(\ADN (\rho_N)e_3, \ADN (w_N)\Big)ds -
      \int_0^t (A\rho
      e_3, A w) ds \bigg\vert\\
      &\quad \quad\leq \left\vert \int_0^t \Big((\ADN (\rho_N)- A\rho
        )e_3, Aw
        \Big)ds\right\vert \\
      &\hspace{3.5 cm} + \left\vert \int_0^t \Big((\ADN
        (\rho_N)e_3, \ADN (w_N) - Aw)\Big)ds \right\vert\\
      &\quad \quad \leq \left\vert \int_0^t \Big((\ADN (\rho_N)- A\rho
        )e_3, Aw
        \Big)ds\right\vert \\
      &\hspace{3.2 cm} + \|\ADN (\rho_N)\|_{L^2(0, T; L^2)}^2 \|\ADN
      (w_N) - Aw\|_{L^2(0, T; H_0)}^2.
    \end{aligned}
  \end{equation*}
  Since the sequence $\{\ADN (\rho_N)\}_{N\in\N}$ is uniformly bounded
  in $L^2(0, T, L^2(\T^3))$ and $Aw\in L^2(0, T; H_0)$, then
  \eqref{eq:utility-00} and \eqref{eq:utility-2} allow us to pass to
  the limit in the above estimate as $N\to+\infty$, and this proves
  \eqref{eq:bi-lin-conv}.

  Recalling that for every $N \in \N$, it holds that $w_N (0) =
  G(u_0)=\overline{u}_0 \in H_2$ and that $\rho_N (0) =
  G(\theta_0)=\overline{\theta}_0 \in H^2$, then taking the limit as
  $N\to+\infty$ in the right-hand side of \eqref{energy-equality-2} we
  get
  \begin{align*}
    \frac{1}{2} \Big(\|\ADN (w_N )(0)\|^2 + &\|\ADN (\rho_N
    )(0)\|^2\Big)
    + \int_0^t \Big(\ADN (\rho_N)e_3,\ADN (w_N)\big)ds \Big) \\
    & \overset{N\to+\infty}{\longrightarrow} \frac{1}{2}
    \big(\|Aw(0)\|^2 + \|A\rho(0)\|^2\big) + \int_0^t (A\rho e_3, A w)
    ds,
  \end{align*}
  whence
  \begin{align*}
    \underset{N\to+\infty}{\lim \sup} & \frac{1}{2} \left(\|\ADN (w_N
      )(t)\|^2 + \|A^{\frac{1}{2}} D^{\frac{1}{2}}_N (\rho_N )(t)\|^2\right) \\
    &+\underset{N\to+\infty}{\lim \inf} \left( \nu\int_0^T\|\nabla
      \ADN (w_N)(s)\|^2ds + \epsilon\int_0^T\|\nabla A^{\frac{1}{2}}
      D_N^{\frac{1}{2}}\rho_N(s)\|^2ds \right)\\
    & \leq \frac{1}{2} (\|Aw(0)\|^2 + \|A\rho(0)\|^2) + \int_0^t
    (A\rho e_3, A w) ds.
  \end{align*}
  By lower semicontinuity of the norm and identification of the weak
  limit, we get the thesis.
\end{proof}


\begin{thebibliography}{1}
\bibitem{Adam-Stolz} N.\, A.\ Adams, S.\ Stolz, \emph{Deconvolution
    methods for subgrid-scale approximation in large eddy simulation,
    in: Modern Simulation Strategies for Turbulent Flow},
  R.T. Edwards, 2001.

\bibitem{Be-Ili-Lay-2006} L.\ C.\ Berselli,\ T.\ Iliescu,\ W.\ J.\
  Layton, \emph{Mathematics of Large Eddy Simulation of Turbulent
    Flows}, Scientific Computation, Springer-Verlag, Berlin, 2006.

\bibitem{Be-Lew-2012} L.\ C.\ Berselli, R.\ Lewandowski,
  \emph{Convergence of approximate deconvolution models to the mean
    Navier-Stokes equations}, Ann. Inst. H. Poincar\'e Anal. Non
  Lin\'eaire 29 (2012) 171-198.

\bibitem{Be-Ca-Lew-2013} L.\ C.\ Berselli, D.\ Catania\ R.\
  Lewandowski, \emph{Convergence of approximate deconvolution models
    to the mean magnetohydrodynamics equations: Analysis of two
    models} J. Math. Anal. Appl. 401 (2013) 864-880.
  
\bibitem{Be-Spi-2011} L.\, C.\, Berselli, S.\, Spirito, \emph{On the
    Boussinesq system: regularity criteria and singular limits},
  Methods Appl. Anal. 18 (2011) 391-416

\bibitem{Cha-Re-2013} T.\ Chac\'on-Rebollo, R.\ Lewandowski,
  \emph{Mathematical and numerical foundations of turbulence models},
  Birkh\"auser, New-York, 2013 (in press).

\bibitem{Do-Gib-1995} C.\ R.\ Doering, J.\ D.\ Gibbon, \emph{Applied
    analysis of the Navier–Stokes equations}, Cambridge Texts in
  Applied Mathematics, Cambridge University Press, Cambridge, 1995.

\bibitem{Fan-Zhou-2009} J.\, Fan, Y.\, Zhou. \emph{A note on
    regularity criterion for the 3D Boussinesq system with partial
    viscosity.}  Appl. Math. Lett. 22 (2009), 802-805.

\bibitem{Fan-Zhou} J.\ Fan, Y.\ Zhou, \emph{On the Cauchy problems for
    certain Boussinesq-$\alpha$ equations} Proceedings of the Royal
  Society of Edinburgh: Section A Mathematics, Vol. 140, Issue 02
  April 2010, pp 319-327

\bibitem{Guo-1995} B.\ Guo, \emph{Nonlinear Galerkin methods for
    solving two dimensional Newton-Boussinesq equations},
  Chin. Ann. Math., Ser. B 16 (1995), no. 3, 379-390.
  
\bibitem{Lew-2009} R.\ Lewandowski, \emph{On a continuous
    deconvolution equation for turbulence models}, Lecture Notes of
  Ne\c{c}as Center for Mathematical Modeling 5 (2009) 62-102

\bibitem{Lions} J.-L.\, Lions, \emph{Quelques m\'ethodes de
    r\'esolution des probl\`emes aux limites non lin\'eaires}, Dunod,
  Gauthier-Villars, Paris, 1969.

\bibitem{Majda-2003} A.\, Majda, \emph{Introduction to PDEs and Waves
    for the Atmosphere and Ocean}, Courant Lecture Notes in
  Mathematics, vol. 9, AMS/CIMS, 2003.

\bibitem{McWill-2006} J.\, C.\, McWilliams \emph{Fundamentals of
    Geophysical Fluid Dynamics}, Department of Atmospheric and Oceanic
  Sciences University of California, Los Angeles, 2006

\bibitem{Sal-1998} R.\, Salmon,\, \emph{Lectures on geophysical fluid
    dynamics.}  Oxford University Press, New York, 1998.

\bibitem{Sag-2001} P.\ Sagaut, \emph{Large Eddy Simulation for
    Incompressible Flows}, Springer-Verlag, Berlin, 2001.

\bibitem{Selmi-2012} R.\, Selmi \emph{Global Well-Posedness and
    Convergence Results for the 3D-Regularized Boussinesq System}
  Canad. J. Math. Vol. 64 (6), 2012 pp. 1415-1435

\bibitem{St-Ad-1999} S.\, Stolz, N.\, A.\, Adams, \emph{An approximate
    deconvolution procedure for large-eddy simulation}, Phys. Fluids
  11 (7) (1999) 1699-1701.

\bibitem{St-Ad-2001} S.\, Stolz, N.\, A.\, Adams, L.\, Kleiser,
  \emph{An approximate deconvolution model for large-eddy simulation
    with application to incompressible wall-bounded flows},
  Phys. Fluids 13 (4) (2001) 997-1015.
\end{thebibliography}
\end{document}